\font\bbbld=msbm10 scaled\magstephalf
\newcommand{\bi}{\bar{i}}
\newcommand{\bj}{\bar{j}}
\newcommand{\bk}{\bar{k}}
\newcommand{\bl}{\bar{l}}
\newcommand{\bn}{\bar{n}}
\newcommand{\bp}{\bar{p}}
\newcommand{\bq}{\bar{q}}
\newcommand{\bz}{\bar{z}}
\newcommand{\bM}{\bar{M}}
\newcommand{\bpartial}{\bar{\partial}}
\def \a{\alpha}
\def \p{\partial}
\newcommand{\fg}{\mathfrak{g}}
\newcommand{\fRe}{\mathfrak{Re}}
\newcommand{\bfC}{\hbox{\bbbld C}}
\newcommand{\bfR}{\hbox{\bbbld R}}
\newcommand{\bfS}{\hbox{\bbbld S}}
\newcommand{\tr}{\mbox{tr}}
\newcommand{\ol}{\overline}
\newcommand{\ul}{\underline}
\newtheorem{theorem}{Theorem}[section]
\newtheorem{lemma}[theorem]{Lemma}
\newtheorem{proposition}[theorem]{Proposition}
 \theoremstyle{definition}
\theoremstyle{remark}
\numberwithin{equation}{section}
\begin{document}
\setlength{\baselineskip}{1.2\baselineskip}

\title[Fully Nonlinear Elliptic Equations]
{On a Class of Fully Nonlinear Elliptic Equations \\ on Hermitian Manifolds}

\author{Bo Guan and Wei Sun}
\address{Department of Mathematics, Ohio State University,
         Columbus, OH 43210}
\email{guan@math.ohio-state.edu \\ sun@math.ohio-state.edu}

\begin{abstract}
We derive {\em a priori} $C^2$ estimates for a class of complex
Monge-Amp\`ere type equations on
Hermitian manifolds. As an application we solve the Dirichlet problem
for these equations under the assumption of existence of a subsolution;
the existence result, as well as the second order boundary estimates,
is new even for bounded domains in $\bfC^n$.

{\em Mathematical Subject Classification (2010):}
  58J05, 58J32, 32W20, 35J25, 53C55.
\end{abstract}

\maketitle

\section{Introduction}

Let $(M^n,\omega)$ be a compact Hermitian manifold of dimension $n \geq 2$ with
smooth boundary $\partial M$ and $\chi$ a smooth real $(1,1)$ form on
$\bar M := M \cup \partial M$.
Define for a function $u \in C^2 (M)$,
\[ \chi_u = \chi + \frac{\sqrt{-1}}{2} \partial \bpartial u \]
and set
\[ [\chi] = \big\{\chi_u:  \, u \in C^2 (M)\big\},
  \;\; [\chi]^+ = \big\{\chi' \in [\chi]: \chi' > 0\}. \]

In this paper we are concerned with 
the equation for $1 \leq \alpha \leq n$,
\begin{equation}
\label{CH-I10}
\begin{aligned}
    \chi_u^n = & \psi \chi_u^{n-\alpha} \wedge \omega^{\alpha}
      \;\; \mbox{in $M$, }.
\end{aligned}
\end{equation}

We require $\chi_{u} > 0$ so that equation~\eqref{CH-I10} is elliptic;
we call such functions {\em admissible} or $\chi$-{\em plurisubharmonic}.
Consequently, we assume $\psi > 0$ on $\bM$; equation~\eqref{CH-I10} becomes
degenerate when $\psi \geq 0$.

When $\alpha = n$ this is the complex Monge-Amp\`ere equation which plays
extremely important roles in complex geometry and analysis, especially in
K\"ahler geometry, and has received extensive study since the
fundamental work of Yau~\cite{Yau78} (see also \cite{Aubin78}) on compact
K\"ahler manifolds and that of Caffarelli, Kohn, Nirenberg and Spruck~\cite{CKNS}
for the Dirichlet problem in strongly pseudoconvex domains in $\bfC^n$.
For $\alpha =1$ equation~\eqref{CH-I10} also arises naturally in geometric
problems; it was posed by Donaldson~\cite{Donaldson99a} in
connection with moment maps and is closely related to the Mabuchi
energy \cite{Chen04}, \cite{Weinkove06}, \cite{SW08}.

Donaldson's problem assumes $M$ is closed, both $\omega$, $\chi$ are K\"ahler
and $\psi$ is constant. It was studied by Chen~\cite{Chen04},
Weinkove~\cite{Weinkove04}, \cite{Weinkove06}, Song and Weinkove~\cite{SW08}
using parabolic methods.
In \cite{SW08} Song and Weinkove give a necessary and sufficient solvability
condition. Their result was extended by Fang, Lai and Ma~\cite{FLM11} to all
$1 \leq \alpha < n$.

In this paper we study the Dirichlet problem for equation~\eqref{CH-I10}
on Hermitian manifolds.
Given $\psi \in C^{\infty} (\bM)$ and $\varphi \in C^{\infty} (\partial M)$,
we wish to find a solution $u \in C^{\infty} ({\bar M})$ of equation~\eqref{CH-I10}
satisfying the boundary condition
 \begin{equation}
\label{CH-I10b}
 u = \varphi \;\; \mbox{on $\partial M$}.
\end{equation}

The Dirichlet problem for the complex Monge-Amp\`ere equation in $\bfC^n$
was studied by Caffarelli, Kohn, Nirenberg and Spruck~\cite{CKNS}
on strongly pseudoconvex domains. 
Their result was extended to Hermitian manifolds by Cherrier and Hanani 
\cite{CH99}, \cite{Hanani96a}, and by the first author~\cite{Guan98b}
to arbitrary bounded domains in $\bfC^n$ under the
assumption of existence of a subsolution. See also the more recent papers \cite{GL10},
\cite{Zhang10}, and related work of Tosatti and Weinkove~\cite{TWv10a},
\cite{TWv10b} who completely extended the zero order estimate of Yau~\cite{Yau78}
on closed K\"ahler manifolds to the Herimatian case.
In \cite{LiSY04} Li treated the Dirichlet problem for more general fully
 nonlinear elliptic equations in $\bfC^n$ but needed to assume the existence of
a {\em strict} subsolution. Li's result does not cover
equation~\eqref{CH-I10} as it fails to satisfy some of the key structure conditions
in \cite{LiSY04}.

In this paper we prove the following existence result which is new even in the case
when $M$ is a bounded domain in $\bfC^n$ and $\chi = 0$; we assume
$2 \leq \alpha \leq n-2$ as the cases $\alpha = 1$ and $\alpha = n-1$ were considered
in \cite{GL12} and \cite{GL3}, while for the complex Monge-Amp\`ere equation
($\alpha = n$) it was proved in \cite{GL10}.

\begin{theorem}
\label{gl-thm20}
Let $\psi \in C^{\infty} (\bM)$, $\psi > 0$ and $\varphi \in C^{\infty} (\partial M)$.
There exists a unique admissible solution $u \in C^{\infty} (\bM)$ of the Dirichlet
problem~\eqref{CH-I10}-\eqref{CH-I10b},
provided that there exists an admissible subsolution $\ul{u} \in C^2 (\bM)$:
 \begin{equation}
\label{CH-I20b}
 \left\{ \begin{aligned}
& \chi_{\ul{u}}^n \geq  \psi \chi_{\ul{u}}^{n-\alpha} \wedge \omega^{\alpha}
\;\; \mbox{on $\bM$}  \\
& \ul{u} = \varphi \;\; \mbox{on $\partial M$}.
  \end{aligned}  \right.
\end{equation}
\end{theorem}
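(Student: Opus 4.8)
The plan is to follow the continuity method, which reduces the existence statement to a priori estimates. The main thrust of the paper (as advertised in the abstract) is the second-order interior and boundary estimates; once those are in hand, the proof of Theorem~\ref{gl-thm20} is fairly standard, so let me describe the architecture.

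First I would set up the continuity path. A natural choice is to connect the desired equation to one solved by the subsolution $\ul u$: for $t \in [0,1]$ consider
\[
  \chi_{u^t}^n = \psi_t \, \chi_{u^t}^{n-\alpha} \wedge \omega^\alpha, \quad u^t = \varphi \ \text{on } \partial M,
\]
where $\psi_t$ interpolates between a function for which $\ul u$ itself is an admissible solution at $t=0$ (for instance $\psi_0 = \chi_{\ul u}^n / (\chi_{\ul u}^{n-\alpha}\wedge\omega^\alpha)$) and $\psi_1 = \psi$, chosen so that $\ul u$ remains an admissible subsolution for every $t$. Let $T = \{t \in [0,1] : \eqref{CH-I10}_t \text{ has an admissible solution } u^t \in C^{\infty}(\bM) \text{ with } u^t = \varphi \text{ on } \partial M\}$. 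The set $T$ is nonempty since $0 \in T$. Openness follows from the implicit function theorem in H\"older spaces: the linearization of the operator $F(u) = \log(\chi_u^n) - \log(\chi_u^{n-\alpha}\wedge\omega^\alpha) - \log\psi_t$ at an admissible solution is a second-order elliptic operator (the admissibility condition $\chi_u > 0$ guarantees uniform ellipticity), with no zeroth-order term, so it is invertible on the relevant Dirichlet spaces; standard Schauder theory then gives a solution for nearby $t$.

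The crux is closedness of $T$, which requires uniform a priori estimates in $C^{2,\beta}(\bM)$ independent of $t$. I would proceed in the usual hierarchy: (i) the $C^0$ estimate, where the subsolution $\ul u$ gives the lower bound directly via the comparison principle (admissibility plus $\chi_{\ul u}^n \geq \psi\,\chi_{\ul u}^{n-\alpha}\wedge\omega^\alpha$), and an upper bound comes from comparison with a solution of the Monge--Amp\`ere equation or a suitable barrier; (ii) the gradient estimate $\sup_M |\nabla u|$, reducing the boundary gradient bound to constructing barriers from $\ul u$ and the interior gradient bound via a Bernstein-type argument or blow-up; (iii) the second-order estimate $\sup_{\bM} |\partial\bpartial u| \leq C(1 + \sup_{\partial M}|\partial\bpartial u|)$, and then the boundary estimate $\sup_{\partial M}|\partial\bpartial u| \leq C$ — this is the main analytic obstacle and the central contribution of the paper. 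The boundary double-normal derivative estimate in particular is delicate: one must use the subsolution crucially, together with the concavity properties of the function $f(\lambda) = \big(\sigma_n(\lambda)/\sigma_{n-\alpha}(\lambda)\big)^{1/\alpha}$ on the positive cone and a careful construction of tangential and mixed barriers. Once $\|u\|_{C^2(\bM)}$ is bounded and the equation is uniformly elliptic, it is concave in $\partial\bpartial u$ (after taking the appropriate root and exploiting that $(\sigma_n/\sigma_{n-\alpha})^{1/\alpha}$ is concave on the positive cone), so the Evans--Krylov theorem plus its boundary version yield a uniform $C^{2,\beta}(\bM)$ bound, and Schauder bootstrapping gives $C^{\infty}$ bounds. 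This closes $T$, so $1 \in T$.

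Finally, uniqueness of the admissible solution follows from the comparison principle: if $u_1, u_2$ are two admissible solutions agreeing on $\partial M$, then at an interior maximum of $u_1 - u_2$ one compares $\chi_{u_1}$ and $\chi_{u_2}$ and uses the strict monotonicity of $f$ along the positive cone to force $u_1 \equiv u_2$. I expect the boundary $C^2$ estimate — especially handling the double normal derivative when $1 < \alpha < n$, where the equation is genuinely of mixed Monge--Amp\`ere/Hessian type — to be the hard part, and the place where the subsolution hypothesis \eqref{CH-I20b} does its essential work.
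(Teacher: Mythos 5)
Your proposal follows essentially the same architecture the paper uses: the $C^0$, gradient, interior $\Delta u$, and boundary second-order estimates feed into Evans--Krylov and Schauder bootstrapping, and existence then follows by the continuity method with uniqueness from the comparison principle. The paper explicitly states that ``Theorem~\ref{gl-thm20} may then be proved by the continuity method. These steps are all well understood so we shall omit them,'' and the standard details you supply (the continuity path anchored at $\psi_0 = \chi_{\ul u}^n/(\chi_{\ul u}^{n-\alpha}\wedge\omega^\alpha)$, openness via the implicit function theorem, closedness via the a priori estimates, concavity of $(S_n/S_{n-\alpha})^{1/\alpha}$ for Evans--Krylov) are precisely what is being omitted.
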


In order to solve the Dirichlet problem~\eqref{CH-I10}-\eqref{CH-I10b}
one needs to derive {\em a priori} $C^2$ estimates up to the boundary for admissible
solutions. The most difficult step is probably the second order estimates on the
boundary.

\begin{theorem}
\label{gl-thm20bc2}
Suppose $\psi \in C^1 (\bM)$, $\psi > 0$ and $\varphi \in C^4 (\partial M)$
and $\ul{u} \in C^2 (\bM)$ is an admissible subsolution satisfying
\eqref{CH-I20b}.
Let $u \in C^3 (\bM)$ be an admissible solution of the Dirichlet
problem~\eqref{CH-I10}-\eqref{CH-I10b}. Then
\begin{equation}
 \label{cma-37}
\max_{\partial M} |\nabla^2 u| \leq C
\end{equation}
where $C$ depends on $|u|_{C^1 (\bM)}$, $\min \psi^{-1}$, $|\ul{u}|_{C^2 (\bM)}$
and $\min \{c_1: c_1 \chi_{\ul u} \geq \omega\}$, as well as other known data.
\end{theorem}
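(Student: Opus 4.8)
The plan is to follow the strategy of \cite{CKNS} for the second order boundary estimate, adapted to the Hermitian setting and to the hypothesis of a merely (non-strict) subsolution via the dichotomy device of the subsolution method (\cite{Guan98b}, \cite{GL10}, \cite{GL12}, \cite{GL3}). It helps to recast \eqref{CH-I10} in eigenvalue form: if $\l = (\l_1,\dots,\l_n)$ are the eigenvalues of $\chi_u$ with respect to $\o$, then \eqref{CH-I10} reads $\s_n(\l) = \tilde\psi\,\s_{n-\a}(\l)$ with $\tilde\psi = \f{(n-\a)!\,\a!}{n!}\,\psi$, i.e.
\[ F(\chi_u) := \Big(\f{\s_n}{\s_{n-\a}}\Big)^{1/\a}\!(\l) = \tilde\psi^{1/\a}, \]
and admissibility ($\chi_u > 0$) places $\l$ in the positive orthant, on which $F$ is smooth, increasing, concave, and homogeneous of degree one. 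Put $F^{i\bj} = \p F/\p(\chi_u)_{i\bj}$ and $L := F^{i\bj}(\chi_u)\,\p_i\p_{\bj}$. By the comparison principle $u \ge \ul u$ in $M$ with equality on $\p M$, so $h := u - \ul u \ge 0$ vanishes on $\p M$; concavity together with \eqref{CH-I20b} gives $Lh = F(\chi_u) - F^{i\bj}(\chi_u)(\chi_{\ul u})_{i\bj} \le F(\chi_u) - F(\chi_{\ul u}) \le 0$, and --- using the strict positivity $\chi_{\ul u} \ge c_1^{-1}\o$ --- the sharper \emph{dichotomy}: there are uniform $\e, R_0 > 0$ so that at each point of $M$ either $|\l| \le R_0$ (hence $|\nabla^2 u| \le C$ there), or $-Lh \ge \e\big(1 + \sum_i F^{i\bi}(\chi_u)\big)$. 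This last inequality plays the role that strict inequality in \eqref{CH-I20b} would.

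Fix $p \in \p M$ and choose holomorphic coordinates $z^j = x^j + \sqrt{-1}\,y^j$ centered at $p$ with $\p M = \{\rho = 0\}$ near $p$ and $\nabla\rho(p)$ along the $x^n$-axis, so $\p_{x^n}$ is normal while $\p_{x^1},\dots,\p_{x^{n-1}},\p_{y^1},\dots,\p_{y^n}$ are tangent at $p$. The estimates split into three parts. \emph{(a) Pure tangential derivatives.} Differentiating the boundary condition $u = \v$ twice in directions tangent to $\p M$ and using that $h$ vanishes on $\p M$, one reads off $|u_{i\bj}(p)| \le C$ for $1 \le i,j \le n-1$ directly from the $C^2$ norms of $\v$ and $\ul u$. \emph{(b) Mixed tangential--normal derivatives.} For a unit tangential field $T$ near $p$ and $\v$ extended smoothly, consider on $\O_\d := M \cap B_\d(p)$ the barrier
\[ \Psi := A_1 h + A_2 |z|^2 - A_3 \sum_{k < n} |\p_k(u - \v)|^2 . \]
Using $Lh \le 0$ and the dichotomy, $L|z|^2 = \sum_i F^{i\bi} > 0$, the bounds from (a), and the negative definite term $-A_3\sum_{k<n}F^{i\bj}(u-\v)_{ik}\overline{(u-\v)_{jk}}$ that appears in $L\Psi$ from the last summand of $\Psi$ (used through Cauchy--Schwarz to absorb the second-order terms generated by the Hermitian torsion and by the non-holomorphicity of $T$), one verifies that for $1 \ll A_3 \ll A_2 \ll A_1$ and $\d$ small, $L\big(\Psi - T(u-\v)\big) \le 0$ in $\O_\d$ while $\Psi - T(u-\v) \ge 0$ on $\p\O_\d$ (on $\p M \cap B_\d$ only the $A_2|z|^2$ term survives; on $M \cap \p B_\d$ it dominates the rest). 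The minimum principle and $\Psi(p) = T(u-\v)(p) = 0$ then give $\p_{x^n}\big(T(u-\v)\big)(p) \le C$; running the same argument with $-T$ yields $|u_{i\bn}(p)| \le C$ for $i \le n-1$. \emph{(c) Double normal derivative.} Since $\chi_u > 0$ we have $u_{n\bn}(p) \ge -C$ automatically, so only an upper bound is needed. The tangential block $A' = \{(\chi_u)_{i\bj}(p)\}_{i,j<n}$ and the mixed column $\{(\chi_u)_{i\bn}(p)\}_{i<n}$ are now controlled by (a) and (b), and $\chi_{\ul u} \ge c_1^{-1}\o$ forces $A' \ge c\,I > 0$; a standard argument (cf.\ \cite{CKNS}, \cite{Guan98b}) using the equation $\s_n = \tilde\psi\,\s_{n-\a}$ together with these bounds then gives $(\chi_u)_{n\bn}(p) \le C$, hence $u_{n\bn}(p) \le C$. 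Together with the boundary data, (a)--(c) bound every component of $\nabla^2 u$ on $\p M$, which is \eqref{cma-37}.

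The crux is part (b), for two reasons. First, the Hermitian (non-K\"ahler) structure of $(M,\o)$ produces torsion and curvature terms on commuting $\p_i\p_{\bj}$ past $T$ and on differentiating $\chi$, so the barrier must be engineered to absorb terms that a priori involve the very second derivatives being estimated; with (a) in hand this is done by a Cauchy--Schwarz absorption into the negative definite term of $\Psi$, as in \cite{GL10}, \cite{GL12}, \cite{GL3}. Second, because only a non-strict subsolution is assumed, the definite negative quantity needed to run the minimum principle on $\O_\d$ must come entirely from the dichotomy $-Lh \ge \e(1 + \sum_i F^{i\bi})$ rather than from strict inequality in \eqref{CH-I20b} --- exactly the point at which the stronger hypothesis of \cite{LiSY04} is avoided. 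Granting (b), part (c) is a short argument in linear algebra and one variable and (a) is immediate.
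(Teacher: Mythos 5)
Your parts (a) and (b) are in the same spirit as the paper: the paper also reads pure tangential second derivatives off the boundary condition, and for the mixed tangential--normal derivatives it builds a barrier around $u - \ul{u}$ (its function $v = (u-\ul u) + t\sigma - T\sigma^2$ in Lemma~\ref{cma-lemma-20}) and uses exactly the dichotomy you describe (Lemma~\ref{cmate-2alternative}); the fine details are deferred to \cite{GL10} and \cite{GL3}, as you also do. So far so good.

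Part (c) is where the proposal has a genuine gap, and it is precisely the part the paper considers new and hardest. Your ``standard argument (cf.\ \cite{CKNS}, \cite{Guan98b})'' is the Monge--Amp\`ere argument: with the tangential block $A'$ bounded below and the mixed column bounded, $\det(\chi_u)_{i\bj}(p) \le C$ forces $(\chi_u)_{n\bn}(p) \le C$. That reasoning does \emph{not} transfer to equation~\eqref{CH-I10}. In eigenvalue form the equation is $S_\alpha(1/\lambda_1,\dots,1/\lambda_n) = C_n^\alpha\psi^{-1}$, and as $\lambda_n \to \infty$ with $\lambda_1,\dots,\lambda_{n-1}$ held in a compact set of the positive cone, $S_\alpha(1/\lambda)$ tends to the finite limit $S_\alpha(1/\lambda_1,\dots,1/\lambda_{n-1},0)$, so no contradiction with the equation is obtained --- you cannot conclude $\lambda_n \le C$ from the equation and the control of the other entries alone. (Your side claim that $\chi_{\ul u}\ge c_1^{-1}\omega$ ``forces $A'\ge cI$'' is also not justified for an arbitrary smooth boundary: the identity $u_{\beta\bar\gamma}=\ul u_{\beta\bar\gamma}+(u-\ul u)_\nu\sigma_{\beta\bar\gamma}$ gives $A'\ge cI$ only when $\{\sigma_{\beta\bar\gamma}\}\ge 0$, i.e.\ under a pseudoconvexity hypothesis that the theorem does not assume; in the paper the lower bound on the spectrum of $\{\fg_{i\bj}(0)\}$ is obtained only \emph{after} the normal--normal bound, so using it to get the normal--normal bound would be circular.)

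The paper's actual part (c) follows Trudinger's idea (\cite{Trudinger95}): set
\[
m_0 \;=\; \min_{\partial M}\frac{n\,\hat{\chi}_u^{\,n-1}}
   {\psi\,(n-\alpha)\,\hat\chi_u^{\,n-\alpha-1}\wedge\hat\omega^{\alpha}},
\]
where $\hat\chi_u,\hat\omega$ are restrictions to $T_{\bfC}\partial M$, introduce the concave, degree-one operator $G$ on $(n-1)\times(n-1)$ positive Hermitian matrices, and prove $m_0 > 1$. This is done by a case split at the minimizing boundary point: either a linearized inequality gives $m_0 \ge 1 + \theta_0/2$ directly (using the subsolution and the elementary Lemma~\ref{cma-lemma-30}, which says $G^\alpha(B)\ge (1+c_0)\,S_n(\lambda(A))/S_{n-\alpha}(\lambda(A))$ for the tangential block $B$ of $A$), or one runs a maximum principle for an auxiliary function $\varPhi$, combined again with Lemma~\ref{cma-lemma-20}, to bound $(u_{x_n})_{n\bar n}(0)$ and then all eigenvalues, whence $m_0 \ge 1 + c_0$ by the same lemma. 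Only then does the bound $u_{n\bn}(0)\le C$ follow. None of this is a short ``linear algebra in one variable'' step; you would need to supply the Trudinger-style argument (or an equivalent) to close part (c).
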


This estimate is new for domains in $\bfC^n$. Note that $\partial M$
is assumed to be smooth and compact in Theorem~\ref{gl-thm20bc2}, but otherwise
is completely arbitrary. In general, the Dirichlet
problem~\eqref{CH-I10}-\eqref{CH-I10b} is not always solvable in an
arbitrary smooth bounded domain in $\bfC^n$ without the subsolution assumption.
In the theory of nonlinear elliptic equations, many well
known classical results assume certain geometric conditions on the boundary of the
underlying domain; see e.g. \cite{Serrin69}, \cite{CKNS}, \cite{CNS1} and \cite{CNS3}.
In \cite{GS93}, \cite{Guan98a} and \cite{Guan98b}, J. Spruck and the first author
were able to solve the Dirichlet problem for real and complex Monge-Amp\`ere
equations on arbitrary smooth bounded domains assuming the existence of a
subsolution. Their work was motivated by applications to geometric problems and
had been found useful in some important problems such as the proof by
P.-F. Guan~\cite{GuanPF02}, ~\cite{GuanPF08} of the Chern-Levine-Nirenberg
conjecture~\cite{CLN69}, and work on the Donaldson conjectures~\cite{Donaldson99}
on geodesics in the space of K\"ahler metrics; we refer the reader to \cite{PSS12}
for recent progress and further references on this fast-developing subject.

On a closed K\"ahler manifold $(M, \omega)$,
Fang, Lai and Ma~\cite{FLM11} proved second and zero order estimates
for equation~\eqref{CH-I10} when $\chi$ is also K\"ahler and $\psi$ is constant.
We extend their second order estimates to Hermitian manifolds and for general $\chi$
and $\psi$. Technically the major difficulty is to control extra third
order terms which occur due to the nontrivial torsion of the Hermitian metric.
This was done in \cite{GL12}, \cite{GL3} for $\alpha = 1$ and $\alpha = n-1$;
the case $2 \leq \alpha \leq n-2$ is considerably more complicated. In order
to solve the Dirichlet problem we also need global gradient estimates.
Following \cite{SW08} and \cite{FLM11} let
\begin{equation}
\label{CH-I20}
 \mathscr{C}_{\alpha} (\omega)
   = \big\{[\chi]: \, \exists \, \chi' \in [\chi]^+, \,
        n\chi'^{n-1} > (n-\alpha) \psi \chi'^{n-\alpha -1}\wedge \omega^\alpha\big\}.
\end{equation}

\begin{theorem}
\label{cmate-main-1}
Let $u\in C^4(M) \cap C^2 (\bar M) $ be an admissible solution of
equation~(\ref{CH-I10}) where $\psi \in C^2(\bar M)$, $\psi >0$.
Suppose that $\chi \in \mathscr{C}_{\alpha} (\omega)$.
Then there are constants $C_1, C_2$ depending on $|u|_{C^0(\bar M)}$ such that
\begin{equation}
    \label{CH-I30g}
    \sup_M |\nabla u| \leq C_1 (1 + \sup_{\p M}|\nabla u|),
\end{equation}
\begin{equation}
    \label{CH-I30s}
    \sup_M \Delta u  \leq C_2(1 + \sup_{\p M} \Delta u).
\end{equation}
In particular, if $M$ is closed ($\partial M = \emptyset$) then
$ |\nabla u| \leq C_1$ and $|\Delta u | \leq C_2$ on $M$.
\end{theorem}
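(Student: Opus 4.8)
The plan is to establish the two estimates separately, using the maximum principle applied to well-chosen auxiliary functions. For the gradient bound~\eqref{CH-I30g} I would follow the now-standard Cheng--Yau type argument adapted to the Hermitian setting: set $\phi = \log |\nabla u|^2 + A \underline{u} + B(\inf_M u - u)$, or more robustly $\phi = \log |\nabla u|^2 - \gamma(u)$ for a suitable convex function $\gamma$, and suppose $\phi$ attains its interior maximum at a point $p \in M$. At $p$ one computes $F^{i\bar j} \partial_i \partial_{\bar j} \phi \le 0$ where $F^{i\bar j}$ is the linearized operator associated to~\eqref{CH-I10}, namely $F^{i\bar j} = \frac{\partial}{\partial (\chi_u)_{i\bar j}}\big(\log \chi_u^n - \log(\psi \chi_u^{n-\alpha}\wedge\omega^\alpha)\big)$. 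Differentiating equation~\eqref{CH-I10} once gives control of the third-order derivatives appearing in $F^{i\bar j}\partial_i\partial_{\bar j}(\log|\nabla u|^2)$, and the ellipticity together with the structure condition $\chi \in \mathscr{C}_\alpha(\omega)$ ensures that $F^{i\bar j}$ has a uniformly positive trace against $\omega$, so the negative term $-\gamma''|\nabla u|^2 \sum F^{i\bar i}$ dominates. The torsion of $\omega$ produces extra terms linear in $|\nabla u|$, which are absorbed by choosing the constant in $\gamma$ large; this yields $|\nabla u|(p) \le C$, hence~\eqref{CH-I30g} after comparing boundary and interior maxima.

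For the Laplacian estimate~\eqref{CH-I30s} I would apply the maximum principle to $w = \log \Delta u + \varphi(|\nabla u|^2) - A\underline{u}$ (or $\log(\lambda_{\max})$ in place of $\log\Delta u$), where $\varphi$ is an increasing function to be determined and $A$ is a large constant. Here $\Delta u$ may be taken as the complex Laplacian $g^{i\bar j}u_{i\bar j}$, which is comparable to $\operatorname{tr}_\omega \chi_u$ up to bounded terms. Differentiating~\eqref{CH-I10} twice and commuting covariant derivatives (which is where the Hermitian torsion and the curvature of $\omega$ enter) produces, at an interior maximum point of $w$, an inequality of the schematic form
\[ 0 \ge F^{i\bar j}\partial_i\partial_{\bar j} w \ge \frac{1}{\Delta u}\Big( F^{i\bar j,k\bar l} |\nabla_p (\chi_u)_{i\bar j}|\cdot|\nabla_p (\chi_u)_{k\bar l}| \text{-type terms} \Big) + \varphi' F^{i\bar j}\partial_i\partial_{\bar j}|\nabla u|^2 - C\big(1 + \textstyle\sum F^{i\bar i}\big) - A F^{i\bar j}\underline{u}_{i\bar j}. \]
The concavity of $\log \chi_u^n - \log(\psi\chi_u^{n-\alpha}\wedge\omega^\alpha)$ in the Hessian entries (a property that holds for this operator in the admissible cone, as in \cite{FLM11}) makes the second-derivative terms of the right sign, and the subsolution condition~\eqref{CH-I20b} gives, via a now-standard argument, that $A F^{i\bar j}(\underline u - u)_{i\bar j} \ge \epsilon(1 + \sum F^{i\bar i}) - C$ when $\Delta u$ is large, so that choosing $A$ and $\varphi'$ appropriately forces $\Delta u(p) \le C$. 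Again the gradient bound~\eqref{CH-I30g} is used to control the term $\varphi'(|\nabla u|^2)$ and the lower-order contributions.

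The main obstacle, exactly as the authors indicate in the Introduction, will be controlling the \emph{extra third-order terms} generated by the nontrivial torsion of the Hermitian metric when commuting derivatives in the Laplacian estimate: these are terms of the form $\operatorname{Re}\big(F^{i\bar j} T^k_{i\bar l} \nabla_k \nabla_{\bar j}\cdots\big)$ that are cubic in the third derivatives of $u$ and are not obviously dominated by the "good" second-derivative term coming from concavity. For $\alpha=1$ and $\alpha=n-1$ this was handled in \cite{GL12}, \cite{GL3}; for $2\le\alpha\le n-2$ one must exploit more carefully the explicit form of $F^{i\bar j}$ and $F^{i\bar j,k\bar l}$ for this operator — in particular a lower bound on $F^{i\bar j,k\bar l}$ in the "off-diagonal" directions — to absorb the torsion terms into the concavity term after a Cauchy--Schwarz splitting, at the cost of enlarging the constant in front of $\varphi(|\nabla u|^2)$. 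Once this absorption is carried out, the remainder of the argument is the routine maximum-principle bookkeeping sketched above.
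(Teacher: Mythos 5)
Your overall strategy --- apply the maximum principle to an auxiliary function involving $\underline{u}-u$, use a concavity-type inequality for the symmetric functions to kill the bad third-order terms, and invoke a subsolution lemma for the crucial sign --- is indeed the same genre as the paper's Sections~3 and~5. But there are two places where your plan and the paper's proof genuinely diverge, and one of them is a real gap.

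\emph{The gradient estimate.} You propose $\phi = \log|\nabla u|^2 - \gamma(u)$ with $\gamma$ convex and argue that ``the negative term $-\gamma''|\nabla u|^2\sum F^{i\bar i}$ dominates'' because the cone condition $\chi\in\mathscr{C}_\alpha(\omega)$ gives a ``uniformly positive trace.''  This is not where the cone condition enters, and the dominance claim is not correct as stated: the relevant linearized quantity $\sum_i S_{\alpha-1;i}(\mathfrak g^{i\bar i})^2$ (which multiplies $|\eta_i|^2$) is \emph{not} uniformly bounded below --- when $w=\operatorname{tr}\chi_u$ is large it can be small in some directions --- while $\sum_i S_{\alpha-1;i}\mathfrak g^{i\bar i}=\alpha S_\alpha$ is bounded by the equation so you get no help there either. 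The paper's proof of \eqref{CH-I30g} hinges on a two-case analysis at the maximum point: if $w> N$ (with $N$ fixed so that Lemma~\ref{cmate-2alternative} applies) one uses \eqref{GSn-P50}, together with a further sub-case splitting on whether $\min_i S_{\alpha-1;i}(\mathfrak g^{i\bar i})^2$ is large; if $w\le N$ one instead exploits the lower bound $\min_i S_{\alpha-1;i}(\mathfrak g^{i\bar i})^2 \ge w^{-\alpha-1}\ge N^{-\alpha-1}$ and the ellipticity bound $\epsilon\omega\le\chi_{\underline u}$.  It is precisely this dichotomy that makes the coefficient in front of $|\nabla\eta|^2$ coercive. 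Also, the paper's auxiliary function is $e^{Ae^{\eta}}|\nabla u|^2$ with $\eta=\underline u - u + \sup(u-\underline u)$, built on the subsolution rather than a convex $\gamma(u)$; using $\gamma(u)$ does not by itself produce the subsolution term $\sum F^{i\bar j}(\underline u - u)_{i\bar j}$ that Lemma~\ref{cmate-2alternative} controls.

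\emph{The Laplacian estimate.}  Here your plan is closer to the paper's, and your identification of the key difficulty --- bounding the torsion-generated third-order terms via the ``off-diagonal'' concavity of the operator, which is exactly the content of \eqref{glz} and \eqref{cmate-eq149} --- is correct.  However you include a term $\varphi(|\nabla u|^2)$ in the test function (Hou--Ma--Wu style) and explicitly say the gradient bound is then used.  The paper's test function is simply $w\,e^{\phi}$ with $\phi=e^{A\eta}$; the third-order terms are absorbed by \eqref{glz}/\eqref{cmate-eq149} alone and the subsolution handles the rest, with no gradient input.  This is not merely cosmetic: the authors stress that their $C_2$ is independent of $C_1$, whereas your route would make $C_2$ depend on the gradient bound.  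Your version would still prove the stated inequality \eqref{CH-I30s}, so this is a difference of route and strength rather than an error, but the $\varphi(|\nabla u|^2)$ term is unnecessary here and hides what the concavity inequality actually accomplishes.

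In short: the Laplacian part of your sketch is a workable (if slightly weaker) variant; the gradient part as written has a real gap --- the ``uniform positive trace so $\gamma''$ dominates'' step does not go through, and you need the paper's explicit case analysis on $w$ and the alternative Lemma~\ref{cmate-2alternative} to close it.
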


The cone $\mathscr{C}_{\alpha} (\omega)$ was first introduced by
Song and Weinkove~\cite{SW08} ($\alpha=1$) and
Fang, Lai and Ma~\cite{FLM11} who derived the
estimate \eqref{CH-I30s}
on a closed K\"ahler manifold
$(M, \omega)$ when $\chi$ is also K\"ahler and
 \[ \psi = c_{\alpha} :=
 \frac{\int_M \chi^n}{\int_M \chi^{n-\alpha} \wedge \omega^{\alpha}},\]
which is a K\"ahler class invariant.
As in ~\cite{SW08}, ~\cite{FLM11} the constant $C_2$ in
Theorem~\ref{cmate-main-1} is independent of gradient
bounds, i.e. $C_2$ is independent of $C_1$.

The subsolution assumption \eqref{CH-I20b} implies
$[\chi] \in \mathscr{C}_{\alpha} (\omega)$. On a closed manifold, a subsolution
must be a solution or the equation has no solution. This is a consequence of
the maximum principle and a concavity property of equation~(\ref{CH-I10}).

The gradient estimate \eqref{CH-I30g} is crucial to the proof of
Theorem~\ref{gl-thm20} and is also new when $\omega$ and $\chi$ are
K\"ahler. Indeed, deriving gradient estimates for fully nonlinear equations
on complex manifolds turns out to be a rather challenging and mostly open
question. Only very recently were Dinew and Kolodziej~\cite{DK}
able to prove the gradient estimate using scaling techniques and Liouville
type theorems for the complex Hessian equation
\begin{equation}
\label{CH-I10H}
\begin{aligned}
    \omega^n = & \psi \chi_u^{n-\alpha} \wedge \omega^{\alpha}
\end{aligned}
\end{equation}
 on closed K\"ahler manifolds which is consequently solvable due to the earlier work of
 Hou, Ma and Wu~\cite{HMW10}.

The proof of Theorem~\ref{cmate-main-1} is carried out in
Sections~\ref{cmate-C1} and \ref{cmate-C2} where we derive the estimates
for $|\nabla u|$ and $\Delta u$, the gradient and Laplacian of $u$, respectively.
In Section~\ref{gblq-B} we establish the boundary estimates for second derivatives.
These estimates allow us to derive global estimates for all (real) second
derivatives as in Section 5 in \cite{GL10} and apply the Evans-Krylov theorem
since equation ~\eqref{CH-I10} becomes uniformly elliptic. Theorem~\ref{gl-thm20}
may then be proved by the continuity method. These steps are all well understood
so we shall omit them. In section~\ref{cmate-P} we recall some formulas on
Hermitian manifolds.

\bigskip

\section{Preliminaries}
\label{cmate-P}
\setcounter{equation}{0}
\medskip

Let $g$ and $\nabla$ denote the Riemannian metric and Chern connection of $(M, \omega)$.
The torsion and curvature tensors of $\nabla$ are defined by
\begin{equation}
\label{cma-K95}
\begin{aligned}
   T (u, v) \,& = \nabla_u v - \nabla_v u - [u,v], \\
 R (u, v) w \,& = \nabla_u \nabla_v w - \nabla_v \nabla_u w - \nabla_{[u,v]} w,
\end{aligned}
\end{equation}
respectively.
Following the notations in \cite{GL10}, in local coordinates $z = (z_1, \ldots, z_n)$
we have
\begin{equation}
\label{cma-K70}
\left\{ \begin{aligned}
 g_{i \bj} \,& = g \Big(\frac{\partial}{\partial z_i},
 \frac{\partial}{\partial \bz_j}\Big), \;\; \{g^{i\bj}\} = \{g_{i\bj}\}^{-1}, \\
  T^k_{ij} \,& = \Gamma^k_{ij} - \Gamma^k_{ji}
   = g^{k\bl} \Big(\frac{\partial g_{j\bl}}{\partial z_i}
       - \frac{\partial g_{i\bl}}{\partial z_j}\Big),  \\
 R_{i\bj k\bl} \,&   =  - g_{m \bl} \frac{\partial \Gamma_{ik}^m}{\partial \bz_j}
         = - \frac{\partial^2 g_{k\bl}}{\partial z_i \partial \bz_j}
       + g^{p\bq} \frac{\partial g_{k\bq}}{\partial z_i}
                  \frac{\partial g_{p\bl}}{\partial \bz_j}.
\end{aligned} \right.
\end{equation}

Recall that for a smooth function $v$, $v_{i\bj} = v_{\bj i} = \partial_i \bpartial_j v$,
$v_{i\bj k} = \partial_k v_{i\bj} - \Gamma_{ki}^l v_{l\bj}$ and
\[ v_{i\bj k\bl}
   = \bpartial_l v_{i\bj k} - \ol{\Gamma_{lj}^q} v_{i\bq k}. \]
We have (see e.g. \cite{GL12}),
\begin{equation}
\label{gblq-B145}
\left\{
\begin{aligned}
 v_{i \bj k} - v_{k \bj i} = \,& T_{ik}^l v_{l\bj},  \\
v_{i \bj \bk} - v_{i \bk \bj} = \,& \ol{T_{jk}^l} v_{i\bl},
\end{aligned}
 \right.
\end{equation}
\begin{equation}
\label{gblq-B147}
\left\{ \begin{aligned}
v_{i\bj k\bl} - v_{i\bj \bl k}
      = \,& g^{p\bq} R_{k\bl i\bq} v_{p\bj}
          - g^{p\bq} R_{k \bl p \bj} v_{i\bq}, \\
v_{i \bj k \bl} - v_{k \bl i \bj}
  = \,&  g^{p\bq} (R_{k\bl i\bq} v_{p\bj} - R_{i\bj k\bq} v_{p\bl})
        + T_{ik}^p v_{p\bj \bl} + \ol{T_{jl}^q} v_{i\bq k}
        - T_{ik}^p \ol{T_{jl}^q} v_{p\bq}.
 \end{aligned}  \right.
\end{equation}

Let $u \in C^4 (M)$ be an admissible solution of equation~\eqref{CH-I10}.
As in \cite{GL10} and \cite{GL12},
we denote $\fg_{i\bj} = \chi_{i\bj} + u_{i\bj}$,
$\{\fg^{i\bj}\} =  \{\fg_{i\bj}\}^{-1}$
and $w = \tr \chi + \Delta u$.
Note that $\{\fg_{i\bj}\}$ is positive definite.
Assume at a fixed point $p \in M$ that $g_{i\bj} = \delta_{ij}$
and $\fg_{i\bj}$ is diagonal. Then
\begin{equation}
\label{gblq-R155a}
 \begin{aligned}
u_{i \bi k \bk} - u_{k \bk i \bi}
   = \,& R_{k\bk i\bp} u_{p\bi} - R_{i\bi k\bp} u_{p\bk}
  + 2 \fRe\{\ol{T_{ik}^j} u_{i\bj k}\} -  T_{ik}^p \ol{T_{ik}^q} u_{p\bq},
  \end{aligned}
\end{equation}
and therefore,
\begin{equation}
\label{gblq-R155}
 \begin{aligned}
\fg_{i \bi k \bk} - \fg_{k \bk i \bi}
   = \,& R_{k\bk i\bi} \fg_{i\bi} - R_{i\bi k\bk} \fg_{k\bk}
         + 2 \fRe\{\ol{T_{ik}^j} \fg_{i\bj k}\}
         - |T_{ik}^j|^2 \fg_{j\bj} - G_{i\bi k\bk}
  \end{aligned}
 \end{equation}
where
 \begin{equation}
\begin{aligned}
  G_{i\bi k\bk}
   = \,& \chi_{k \bk i \bi} - \chi_{i \bi k \bk}
     +  R_{k\bk i\bp} \chi_{p\bi} - R_{i\bi k\bp} \chi_{p\bk}
     + 2 \fRe\{\ol{T_{ik}^j} \chi_{i\bj k}\}
     - T_{ik}^p \ol{T_{ik}^q} \chi_{p\bq}.
       \end{aligned}
\end{equation}

Let $S_k (\lambda)$ denote the $k$-th elementary symmetric polynomial
of $\lambda \in \bfR^n$
\[ S_k (\lambda) = \sum_{1 \leq i_1 < \cdots < i_k \leq n}
     \lambda_{i_1} \cdots \lambda_{i_k}. \]
In local coordinates we can write equation~\eqref{CH-I10} in the form
\begin{equation}
\label{cmate-M10'}
 F (\fg_{i\bj}) := \Big(\frac{S_n (\lambda_* (\fg_{i\bj}))}
                    {S_{n-\alpha} (\lambda_* (\fg_{i\bj} ))}\Big)^{\frac{1}{\alpha}}
                 = \Big(\frac{\psi}{C^\a_n}\Big)^{\frac{1}{\alpha}}
\end{equation}
or equivalently,
\begin{equation}
\label{cmate-M10}
    C^\a_n \psi^{-1} = S_\alpha (\lambda^* (\fg^{i\bj}))
\end{equation}
where $\lambda_* (A)$ and $\lambda^* (A)$ denote the eigenvalues of a Hermitian matrix
$A$ with respect to $\{g_{i\bj}\}$ and to $\{g^{i\bj}\}$, respectively.
Unless otherwise indicated we shall use $S_\alpha$ to denote
$S_\alpha(\lambda^* (\fg^{i\bar j}))$ when no possible confusion would occur.
We shall also occasionally write $F (\chi_u) := F (\fg_{i\bj})$ and
$F (\chi_{\ul u}) := F (\ul u_{i\bj} + \chi_{i\bj})$, etc.

Differentiating equation \eqref{cmate-M10} twice at a point $p$ where
$g_{i\bj} = \delta_{ij}$ and $\fg_{i\bj}$ is diagonal, we obtain
\begin{equation}
    \label{cmate-eq148}
    C^\a_n \partial_l (\psi^{-1}) = - \sum_i S_{\a -1;i} (\fg^{i\bi})^2 \fg_{i\bi l}
\end{equation}
and
\begin{equation}
    \label{cmate-eq149'}
\begin{aligned}
    C^\a_n \bpartial_l\partial_l (\psi^{-1})
   = \,& - \sum_i S_{\alpha -1;i}(\fg^{i\bar i})^2 \fg_{i\bar il\bar l}
        + \sum_{i,j} S_{\alpha -1;i}  (\fg^{i\bar i})^2 \fg^{j\bar j}
         (\fg_{i\bar j l} \fg_{j\bar i\bar l} + \fg_{j\bar i l} \fg_{i\bar j\bar l}) \\
       & + \sum_{i \neq j} S_{\alpha -2;ij} (\fg^{i\bar i})^2(\fg^{j\bar j})^2
   \big(\fg_{i \bar i l}\fg_{j \bar j\bar l} - \fg_{j\bar i l} \fg_{i\bar j\bar l}\big)
\end{aligned}
\end{equation}
where for $\{i_1,\cdots,i_s\} \subseteq \{1,\cdots,n\}$,
\[ S_{k;i_1\cdots i_s} (\lambda) = S_k (\lambda|_{\lambda_{i_1} = \cdots = \lambda_{i_s}= 0}). \]

We need the following inequality from \cite{GLZ}; see also Proposition 2.2
in \cite{FLM11},
 \begin{equation}
 \label{glz}
     \sum^n_{i=1} \frac{S_{\a -1;i}(\lambda)}{\lambda_i} \xi_i \bar\xi_i
       + \sum_{i,j} S_{\a -2;ij}(\lambda) \xi_i\bar\xi_j
  \geq \sum_{i,j} \frac{S_{\a -1;i}(\lambda)S_{\a -1;j}(\lambda)}
          {S_\a (\lambda)}\xi_i\bar\xi_j \geq 0
    \end{equation}
for $\lambda = (\lambda_1, \ldots, \lambda_n)$, $\lambda_i > 0$ and
$(\xi_1, \ldots, \xi_n) \in \bfC^n$.
Apply \eqref{glz} to $\lambda_i = \fg^{i\bi}$, $\xi_i = (\fg^{i\bar i})^2 \fg_{i \bar i l}$
and sum over $l$.
We see that
\begin{equation}
\label{cmate-C2-E2}
	\sum_{i,l} S_{\alpha-1;i}(\fg^{i\bar i})^3\fg_{i\bar il}\fg_{i\bar i\bar l}
          + \sum_{i\neq j} \sum_{l} S_{\alpha-2;ij}
      (\fg^{i\bar i})^2(\fg^{j\bar j})^2 \fg_{i\bar il} \fg_{j\bar j\bar l}  \geq 0.
\end{equation}
Note also that
\[ \sum_{i \neq j} (S_{\alpha -1;i} - S_{\alpha-2;ij} \fg^{j\bar j})
       (\fg^{i\bar i})^2 \fg^{j\bar j} \fg_{j\bar i l} \fg_{i\bar j\bar l} \geq 0. \]
We obtain from \eqref{cmate-eq149'},
\begin{equation}
    \label{cmate-eq149}
\begin{aligned}
 \sum_i S_{\alpha -1;i}(\fg^{i\bar i})^2 \fg_{i\bar il\bar l}
   \geq \,& \sum_{i,j} S_{\alpha -1;i}  (\fg^{i\bar i})^2 \fg^{j\bar j}
         \fg_{i\bar j l} \fg_{j\bar i\bar l} - C.
\end{aligned}
\end{equation}

 Let $\ul u \in C^2 (\bM)$, $\chi_{\ul u}> 0$  such that
 \begin{equation}
    \label{cmate-eq14}
 n \chi_{\ul u}^{n-1} > (n-\alpha) \psi \chi_{\ul u}^{n-\alpha -1} \wedge \omega^\alpha.
 \end{equation}

 Thus there is $\epsilon > 0$ such that
\begin{equation}
\label{cmate-eq151}
    \epsilon\omega \leq \chi_{\ul u} \leq \epsilon^{-1} \omega.
\end{equation}

The key ingredient of our estimates in the following sections is
the following lemma.

\begin{lemma}
\label{cmate-2alternative}
There exist constants $N, \theta > 0$ such that when $w \geq N$
at a point $p$ where
$g_{i\bj} = \delta_{ij}$ and $\fg_{i\bj}$ is diagonal,
\begin{equation}
\label{GSn-P50}
\sum_i S_{\alpha -1;i} (\fg^{i\bar i})^2 (\ul u_{i\bar i} - u_{i\bar i})
\geq \theta \sum_i S_{\alpha -1;i} (\fg^{i\bar i})^2 + \theta
\end{equation}
and, equivalently,
\begin{equation}
\label{GSn-P50'}
\sum_{i, j} F^{i\bj} (\ul u_{i\bar j} - u_{i\bar j})
\geq \theta \sum_{i, j} F^{i\bj} g_{i\bar j} + \theta.
\end{equation}
\end{lemma}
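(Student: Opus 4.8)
The plan is to first put both inequalities into a workable shape, then isolate the mechanism — a uniform strict form of the cone condition \eqref{cmate-eq14} — and exploit it through a dichotomy on the size of $\sum_{i,j}F^{i\bj}g_{i\bj}$.

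\emph{Setup.} At $p$ the matrix $F^{i\bj}=\partial F/\partial\fg_{i\bj}$ is diagonal, and differentiating $F=S_\alpha(\lambda^*)^{-1/\alpha}$ (with $\lambda^*=(\fg^{1\bar 1},\dots,\fg^{n\bn})$ and $S_\alpha=S_\alpha(\lambda^*)=C^\a_n\psi^{-1}$ by \eqref{cmate-M10}) gives $F^{i\bi}=\tfrac1\alpha S_\alpha^{-1/\alpha-1}S_{\alpha-1;i}(\lambda^*)(\fg^{i\bi})^2$. Since $S_\alpha=C^\a_n\psi^{-1}$ is pinched between positive constants, $F^{i\bi}$ is a bounded positive multiple of $S_{\alpha-1;i}(\lambda^*)(\fg^{i\bi})^2$, so \eqref{GSn-P50} and \eqref{GSn-P50'} are equivalent up to adjusting $\theta$, and it suffices to prove \eqref{GSn-P50'}. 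By homogeneity $\sum_i F^{i\bi}\fg_{i\bi}=F(\chi_u)=(C^\a_n\psi^{-1})^{-1/\alpha}\le C_0$, so, writing $\ul\fg_{i\bi}=\ul u_{i\bi}+\chi_{i\bi}$ for the diagonal entries of $\chi_{\ul u}$ at $p$, the target becomes $\sum_i F^{i\bi}\ul\fg_{i\bi}\ge\theta\sum_i F^{i\bi}+\theta+C_0$. Finally I record that \eqref{cmate-eq14} holds with a uniform margin: if $\ul a$ denotes the reciprocals of the eigenvalues of $\chi_{\ul u}$, then \eqref{cmate-eq14} reads $S_{\alpha;k}(\ul a)=S_\alpha(\ul a)-\ul a_kS_{\alpha-1;k}(\ul a)<C^\a_n\psi^{-1}$ for all $k$, and by \eqref{cmate-eq151} the quantities $\ul a_k$, $S_{\alpha-1;k}(\ul a)$ are bounded below, so $S_{\alpha;k}(\ul a)\le C^\a_n\psi^{-1}-2\delta_0$ for a fixed $\delta_0>0$; more generally, for any subspace $V$ of codimension $\ge 1$ one has $(\chi_{\ul u}|_V)^{-1}\le(\chi_{\ul u}^{-1})|_V$ by a Schur complement, and eigenvalue interlacing then gives $\sigma_\alpha\big((\chi_{\ul u}|_V)^{-1}\big)\le\max_k S_{\alpha;k}(\ul a)\le C^\a_n\psi^{-1}-2\delta_0$, where $\sigma_\alpha$ is the $\alpha$-th elementary symmetric function of eigenvalues and $\chi_{\ul u}|_V$ the restriction of $\chi_{\ul u}$ to $V$.

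\emph{The dichotomy.} Because $\chi_{\ul u}\ge\epsilon\omega$ forces $\ul\fg_{i\bi}\ge\epsilon$, if $\sum_i F^{i\bi}\ge\Lambda_0:=2\epsilon^{-1}(C_0+1)$ then $\sum_i F^{i\bi}\ul\fg_{i\bi}\ge\epsilon\sum_i F^{i\bi}$ already yields \eqref{GSn-P50'} with $\theta=\epsilon/2$, without using $w\ge N$. So assume $\sum_i F^{i\bi}<\Lambda_0$; then, as the right side of the reformulated inequality is $\le\theta(\Lambda_0+1)+C_0$, it suffices to bound $\sum_i F^{i\bi}(\ul\fg_{i\bi}-\fg_{i\bi})$ below by a fixed $c_1>0$, after which any $\theta\le c_1/(\Lambda_0+1)$ works. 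Here $w\ge N$ enters: it forces a nonempty set $J$ of indices with $\fg_{i\bi}$ large, hence $\fg^{i\bi}$ small; each such index contributes $F^{i\bi}(\ul\fg_{i\bi}-\fg_{i\bi})\ge-F^{i\bi}\fg_{i\bi}=-\tfrac1\alpha S_\alpha^{-1/\alpha-1}S_{\alpha-1;i}(\lambda^*)\fg^{i\bi}\ge-\tfrac1\alpha S_\alpha^{-1/\alpha}$, a bounded loss. On the remaining directions one restricts $\chi_u$ and $\chi_{\ul u}$ to $V:=$ the span of the eigenvectors of $\chi_u$ indexed by $J^c$; there $F^{i\bi},\fg_{i\bi},\ul\fg_{i\bi}$ stay in a compact range, and the corresponding part of $\sum_i F^{i\bi}(\ul\fg_{i\bi}-\fg_{i\bi})$ is, up to an $o(1)$ error as the eigenvalues indexed by $J$ tend to $\infty$, the analogous expression for the lower-dimensional operator $F^{\mathrm{red}}(B)=\sigma_\alpha(B^{-1})^{-1/\alpha}$, which inherits the concavity property of the equation. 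Hence this part is $\ge F^{\mathrm{red}}(\chi_{\ul u}|_V)-F^{\mathrm{red}}(\chi_u|_V)=\sigma_\alpha\big((\chi_{\ul u}|_V)^{-1}\big)^{-1/\alpha}-\sigma_\alpha\big((\chi_u|_V)^{-1}\big)^{-1/\alpha}$, and since $\sigma_\alpha((\chi_u|_V)^{-1})\to C^\a_n\psi^{-1}$ while $\sigma_\alpha((\chi_{\ul u}|_V)^{-1})\le C^\a_n\psi^{-1}-2\delta_0$, the difference exceeds a fixed $c(\delta_0)>0$ once $N$ is large; choosing $N$ so the $J$-losses are dominated by $c(\delta_0)$ gives $c_1>0$.

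\emph{The main obstacle} is exactly this bounded-trace case. When $\sum_i F^{i\bi}$ stays bounded while $w\to\infty$ the eigenvalues of $\chi_u$ need not split cleanly into large and $O(1)$: some may tend to $0$, and the eigenbasis of $\chi_u$ is not continuous, so the reduction to the problem on $V$ — the passage to the limit and the transfer of the cone margin from $\chi_{\ul u}$ to $\chi_{\ul u}|_V$ — must be carried out along sequences $w\to\infty$ with a compactness argument, or the error terms estimated directly via $F^{i\bi}\fg_{i\bi}\le\tfrac1\alpha S_\alpha^{-1/\alpha}$. One must also handle the degenerate configurations with $\dim V<\alpha$, where $F^{\mathrm{red}}$ degenerates: there one pairs a small-eigenvalue direction of $\chi_u$ with the bounded ones and again uses $\sigma_\alpha(\chi_{\ul u}^{-1}|_W)\le C^\a_n\psi^{-1}-2\delta_0$ for every subspace $W$ of codimension $\ge 1$. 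With all cases settled, $N$ and $\theta$ are fixed in terms of $n,\alpha$, $\sup_{\bM}\psi$, $\min_{\bM}\psi$ and the $\epsilon$ of \eqref{cmate-eq151}.
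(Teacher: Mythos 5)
The paper does not prove Lemma~\ref{cmate-2alternative}: it refers to Theorem~2.8 of \cite{FLM11} and to \cite{Guan2012b} and explicitly omits the argument, so there is no in-paper proof to compare against. Your strategy --- reduce \eqref{GSn-P50}/\eqref{GSn-P50'} to one inequality via $F^{i\bi}\propto S_{\alpha-1;i}(\fg^{i\bi})^2$ and $\sum_i F^{i\bi}\fg_{i\bi}=F(\chi_u)$, recast the cone condition as a uniform margin $S_{\alpha;k}(\ul a)\le C_n^{\alpha}\psi^{-1}-2\delta_0$, then split on the size of $\sum_i F^{i\bi}$ --- is sound and in the spirit of those references, and the large-trace case is handled correctly. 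Two points in the bounded-trace case need repair.

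First, the claim that the $J$-losses are ``dominated by $c(\delta_0)$ once $N$ is large'' is not justified as stated: the per-index bound $-F^{i\bi}\fg_{i\bi}\ge -\tfrac1\alpha S_\alpha^{-1/\alpha}$ is a fixed constant that does not shrink with $N$, so the ``direct estimate'' alternative you offer cannot on its own make those losses small. What actually makes them vanish as $w\to\infty$ is the constraint $\sum_i F^{i\bi}<\Lambda_0$ together with the equation $S_\alpha(\lambda^*)=C_n^{\alpha}\psi^{-1}$: along a sequence with $w\to\infty$ these jointly force every $\lambda^*_i=\fg^{i\bi}$ to remain bounded (if $\lambda^*_j\to\infty$ then, writing $F^{j\bj}=\tfrac1\alpha S_\alpha^{-1/\alpha-1}\lambda^*_j\cdot\big(\lambda^*_jS_{\alpha-1;j}(\lambda^*)\big)$ and using $S_\alpha=S_{\alpha;j}+\lambda^*_jS_{\alpha-1;j}$, one checks $F^{j\bj}\to\infty$). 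Consequently $S_{\alpha-1;i}(\lambda^*)$ stays bounded and $F^{i\bi}\fg_{i\bi}=\tfrac1\alpha S_\alpha^{-1/\alpha-1}S_{\alpha-1;i}(\lambda^*)\lambda^*_i\to 0$ for $i\in J$. This is precisely where the compactness/limit argument carries the load, and it needs to be made explicit.

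Second, the degenerate scenarios you flag at the end do not arise in Case~2. No eigenvalue $\fg_{i\bi}$ of $\chi_u$ can tend to $0$, since that would make the corresponding $\lambda^*_i$ unbounded, just ruled out. And $\dim V<\alpha$ is impossible in the limit: $\lambda^*_i\to 0$ for $i\notin V$, so by continuity $S_\alpha\big((\lambda^*_i)_{i\in V}\big)=C_n^{\alpha}\psi^{-1}>0$, which forces $|V|\ge\alpha$. With these two repairs the sketch yields a correct proof, with one small caveat: the compactness-by-contradiction route proves existence of $N,\theta$ but does not, as your last sentence suggests, produce them explicitly in terms of the stated data.
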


Here and in the rest of this paper,
\[ F^{i\bar j} = \frac{\p F}{\p \fg_{i\bar j}} (\fg_{i\bj}). \]
It is well known that $\{F^{i\bar j}\}$ is positive definite.

An equivalent form of Lemma~\ref{cmate-2alternative} and its proof are given
in \cite{FLM11} (Theorem 2.8); see also \cite{Guan2012b} where it is proved for
more general fully nonlinear equations.
So we shall omit the proof here.

\bigskip

\section{The gradient estimates}
\label{cmate-C1}
\setcounter{equation}{0}

\medskip

In this section we establish the {\em a priori} gradient estimates.

\begin{proposition}
Suppose $\chi \in \mathscr{C}_{\alpha} (\omega)$ and
let $u \in C^3 (M) \cap C^1 (\bM)$ be an admissible solution of
\eqref{CH-I10}. 
There is a uniform constant $C > 0$ such that
\begin{equation}
\label{cmate-C1-1}
\sup_{\bM} |\nabla u | \leq C(1 + \sup_{\partial M}|\nabla u|).
\end{equation}
\end{proposition}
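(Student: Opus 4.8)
The plan is to use the standard maximum principle argument for gradient estimates on Hermitian manifolds, adapted to this fully nonlinear setting via Lemma~\ref{cmate-2alternative}. First I would introduce an auxiliary function of the form $W = e^{\phi(u)} |\nabla u|^2$, where $\phi$ is a function to be chosen (typically $\phi(u) = A(1 + \sup u - u)^{-1}$ or $\phi(u) = -Au$ for a large constant $A$, possibly composed with the subsolution $\ul u - u$), and suppose $W$ attains its maximum over $\bM$ at an interior point $p_0 \in M$; if the maximum is on $\partial M$ there is nothing to prove. At $p_0$ I would choose local coordinates so that $g_{i\bj} = \delta_{ij}$ and $\fg_{i\bj}$ is diagonal, and compute the first and second covariant derivatives of $\log W$, using the commutation formulas \eqref{gblq-B145}--\eqref{gblq-B147} to handle the torsion and curvature terms that arise from the non-K\"ahler structure.

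The key computation is to apply the linearized operator $L = \sum_{i} F^{i\bi} \p_i \bpartial_i$ (with the appropriate lower-order corrections from the Chern connection) to $\log W$ at $p_0$. At a maximum, $\p_i \log W = 0$ and $L \log W \leq 0$. The positive "good" third-order terms, schematically $\sum_i F^{i\bi}(|u_{ki}|^2 + |u_{k\bi}|^2)/|\nabla u|^2$, must dominate the "bad" terms. The bad terms fall into three groups: (i) torsion-generated third-order terms of the type $F^{i\bi}\, T \cdot u_{ki}$, which are linear in third derivatives and are absorbed by Cauchy--Schwarz into the good quadratic terms at the cost of a term like $C\sum F^{i\bi}$; (ii) terms coming from differentiating the equation once, i.e.\ from \eqref{cmate-eq148}, which contribute $\sum_i S_{\a-1;i}(\fg^{i\bi})^2 \fg_{i\bi k}$-type expressions that are controlled by the bound on $|\p \psi|$; and (iii) curvature and lower-order terms bounded by $C(1 + |\nabla u|^2)\sum F^{i\bi} + C$. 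The role of the convexity function $\phi'$ is to produce a term $\phi'(u) \sum F^{i\bi} |u_i|^2 \cdot(\text{something})$; more importantly, $\phi''$ produces $\phi''(u)\sum F^{i\bi}|u_i|^2 \cdot |\nabla u|^2$ which, with $\phi$ convex and large, gives a positive contribution of order $|\nabla u|^4 \sum F^{i\bi}$ — this is what must beat the bad quadratic-in-gradient terms.

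The main obstacle, and the reason this estimate is genuinely new, is precisely the interplay between the torsion terms in (i) and the fact that the "good" third-order terms here are weighted by $F^{i\bi} = \p F/\p\fg_{i\bi}$ rather than by a uniform constant, so the Cauchy--Schwarz absorption in (i) leaves behind a term proportional to $\sum_i F^{i\bi}$ (not just $\tr F^{i\bi}$ times a constant we can discard) which must be controlled. This is exactly where Lemma~\ref{cmate-2alternative} enters: once $w = \tr\chi + \Delta u$ is large — which we may assume, since otherwise $\Delta u$ and hence $|\nabla u|$ at $p_0$ would already be bounded in terms of its maximum via an integration argument, or we run the $C^2$ argument simultaneously — inequality \eqref{GSn-P50'} gives $\sum F^{i\bi}(\ul u_{i\bi} - u_{i\bi}) \geq \theta \sum F^{i\bi} + \theta$, and using $\fg_{i\bi} = \chi_{i\bi} + u_{i\bi}$ together with \eqref{cmate-eq151} this furnishes a negative term $-c\,|\nabla u|^2 \sum F^{i\bi}$ with a fixed positive constant $c$, strong enough to absorb the leftover $C\sum F^{i\bi}$ from the torsion and curvature terms after choosing $A$ large. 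Collecting everything, the inequality $L\log W \leq 0$ at $p_0$ forces $|\nabla u(p_0)|^2$, and hence $W(p_0)$ and therefore $\sup_{\bM}|\nabla u|^2$, to be bounded by $C(1 + \sup_{\p M}|\nabla u|^2)$, which is \eqref{cmate-C1-1}. I would also need to verify that the cone condition $\chi \in \mathscr{C}_\alpha(\omega)$ is exactly what guarantees the existence of the function $\ul u$ with $\chi_{\ul u} > 0$ satisfying \eqref{cmate-eq14}, so that Lemma~\ref{cmate-2alternative} applies with this choice.
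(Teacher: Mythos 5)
Your high-level scheme — apply the maximum principle to $e^{\phi}|\nabla u|^2$ with $\phi$ built from $\ul u - u$, expand the linearized operator, control torsion terms by Cauchy--Schwarz, and invoke Lemma~\ref{cmate-2alternative} — is indeed the one the paper uses. But several steps in your narrative are not correct as stated, and one is a genuine gap.

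First and most seriously, you claim the case $w = \tr\chi + \Delta u \leq N$ can be dispensed with ``since otherwise $\Delta u$ and hence $|\nabla u|$ at $p_0$ would already be bounded,'' but a pointwise bound $\Delta u(p_0)\leq N$ does \emph{not} bound $|\nabla u(p_0)|$, nor does ``running the $C^2$ argument simultaneously'' help, since a Laplacian bound alone doesn't control the gradient pointwise. The paper handles $w\leq N$ by a separate and genuinely necessary argument: when $w \leq N$ one has $\min_i S_{\alpha-1;i}(\fg^{i\bi})^2 \geq 1/w^{\alpha+1} \geq 1/N^{\alpha+1}$, so the term $\sum_i S_{\alpha-1;i}(\fg^{i\bi})^2|\eta_i|^2$ controls $|\nabla\eta|^2$ from below with a universal constant; combined with $\eta_{i\bi} = (\chi_{\ul u})_{i\bi} - \fg_{i\bi} \geq \epsilon - \fg_{i\bi}$ from \eqref{cmate-eq151}, this closes the estimate without Lemma~\ref{cmate-2alternative}.

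Second, your description of the ``good'' term is off. You say $\phi''$ produces a contribution of order $|\nabla u|^4\sum F^{i\bi}$ that beats the bad quadratic-in-gradient terms. That is the mechanism in some classical gradient estimates where $\phi$ is a convex function of $u$, but it is not how the paper's argument works. Here $\phi = Ae^{\eta}$ with $\eta = \ul u - u + \sup(u-\ul u)$, and the decisive positive contribution is the \emph{linear} term $A\sum_i S_{\alpha-1;i}(\fg^{i\bi})^2\,\eta_{i\bi}$, which Lemma~\ref{cmate-2alternative} (when $w>N$) bounds below by $A\theta(1 + \sum_i S_{\alpha-1;i}(\fg^{i\bi})^2)$; there is no quartic gradient term in the paper's scheme. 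Relatedly, your characterization of the lemma's output as ``a negative term $-c|\nabla u|^2\sum F^{i\bi}$'' misstates what \eqref{GSn-P50'} gives: it produces a positive coercive term $\theta\sum F^{i\bj}g_{i\bj} + \theta$ on the good side, not a gradient-weighted negative term.

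Finally, even inside case $w > N$, the paper needs a further sub-dichotomy — whether $S_{\alpha-1;i}(\fg^{i\bi})^2 \geq K$ for some $i$ or $\leq K$ for all $i$ — in order to absorb the cross term $\frac{2}{|\nabla u|^2}\sum_i S_{\alpha-1;i}\fg^{i\bi}\,\mathfrak{Re}\{u_i\eta_{\bi}\}$. In the second sub-case one extracts a uniform bound $S_{\alpha-1;i}\leq K'$ from the equation and absorbs the cross term into the $|\eta_i|^2$ term by Cauchy--Schwarz. This sub-split is not present in your outline, and without it the cross term is not obviously controllable. So while your overall approach is the right one, the essential case analysis and the precise role of the subsolution via $\eta_{i\bi}$ — as opposed to convexity in $u$ — need to be filled in before the proof is complete.
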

\begin{proof}
Let $\ul u \in C^2 (\bM)$, $\chi_{\ul u}> 0$ satisfy \eqref{cmate-eq14}
and consider $\phi = Ae^\eta$ where 
\[ \eta = \ul u - u + \sup_M (u - \ul u) \]
and $A$ is a constant to be determined.
Suppose the function $e^{\phi}|\nabla u|^2$ attains its maximal value at an
 interior point $p\in M$. 
Choose local coordinate around $p$ such that
$g_{i\bar j} = \delta_{ij}$ and $\fg_{i\bar j}$ is diagonal at $p$.
At $p$ we have
\begin{equation}
\label{cmate-C0-1}
    \frac{\partial_i(|\nabla u|^2)}{|\nabla u|^2} + \partial_i \phi = 0, \;\;
 \frac{\bar\partial_i(|\nabla u|^2)}{|\nabla u|^2} + \bar\partial_i\phi = 0
\end{equation}
and
\begin{equation}
\label{cmate-C0-2}
    \frac{\bar\partial_i\partial_i(|\nabla u|^2)}{|\nabla u|^2} 
     - \frac{\partial_i(|\nabla u|^2)\bar\partial_i(|\nabla u|^2)}{|\nabla u|^4} 
     + \bar\partial_i\partial_i\phi \leq 0.
\end{equation}

By direct computation,
\begin{equation}
\label{cmate-C0-3}
    \partial_i (|\nabla u|^2) = \sum_k (u_k u_{i \bar k} + u_{ki} u_{\bar k}),
\end{equation}
\begin{equation}
\label{cmate-C0-4}
\begin{aligned}
    \bar \partial_i \partial_i (|\nabla u|^2)
      \,& = \sum_k (u_{k\bar i} u_{\bar k i} + u_{ki} u_{\bar k\bar i}
            + u_{ki\bar i} u_{\bar k} + u_k u_{\bar k i\bar i})  \\
      \,& = \sum_k (u_{ki} u_{\bar k\bar i} + u_{i\bar i k} u_{\bar k}
           + u_{i\bar i\bar k} u_k + R_{i\bar i k\bar l} u_l u_{\bar k}) \\
      + \sum_k  \,& \Big|u_{\bar k i} -  \sum_l T^k_{il} u_{\bar l}\Big|^2
          - \sum_k \Big|\sum_l T^k_{il} u_{\bar l}\Big|^2.
\end{aligned}
\end{equation}
Therefore, by \eqref{gblq-B145} and \eqref{cmate-eq148},
\begin{equation}
\label{cmate-C0-41}
\begin{aligned}
 \sum_i S_{\alpha -1;i} (\fg^{i\bar i})^2 \bar\partial_i\partial_i(|\nabla u|^2)
   \geq \,& \sum_{i,k} S_{\alpha - 1;i}(\fg^{i\bar i})^2  |u_{ki}|^2 \\
      - C |\nabla u|^2  - C |\nabla u|^2  \sum_{i} \,& S_{\alpha - 1;i}(\fg^{i\bar i})^2.
  \end{aligned}
\end{equation}
From (\ref{cmate-C0-1}) and (\ref{cmate-C0-3}),
\begin{equation}
\label{cmate-C0-5}
\begin{aligned}
    \big|\partial_i(|\nabla u|^2)\big|^2
   = \,& \Big|\sum_k u_{ki}u_{\bar k}\Big|^2
       - 2|\nabla u|^2 \sum_k \mathfrak{Re}\big\{ u_k u_{i \bar k} \phi_{\bar i}\big\}
       - \Big|\sum_k u_k u_{i \bar k}\Big|^2 \\
\leq \,& |\nabla u|^2 \sum_k |u_{ki}|^2
       - 2 |\nabla u|^2 \sum_k \mathfrak{Re}\big\{ u_k u_{i \bar k} \phi_{\bar i}\big\}
\end{aligned}
\end{equation}
by Schwarz inequality.

Combining (\ref{cmate-C0-2}), \eqref{cmate-C0-41} and (\ref{cmate-C0-5}) we derive
\begin{equation}
    \label{cmate-C0-F}
    \begin{aligned}
\sum_i S_{\alpha -1;i} (\fg^{i\bar i})^2 (\phi_{i\bi} - C)
  + \frac{2}{|\nabla u|^2} \sum_{i,k} S_{\alpha -1;i} (\fg^{i\bar i})^2
      \mathfrak{Re}\big\{ u_k u_{i \bar k} \phi_{\bar i}\big\}
  \leq \,& C. 
     \end{aligned}
\end{equation}

Next,
\[ \partial_i\phi = \phi \, \partial_i \eta, \;\;
   \bar\partial_i \partial_i \phi
    = \phi \, (|\partial_i\eta|^2 + \bar\partial_i\partial_i\eta). \]
Therefore,
\begin{equation}
    \label{cmate-C0-6}
    \begin{aligned}
 2 \phi^{-1} 
              \sum_k \mathfrak{Re} \big\{u_k u_{i \bar k} \phi_{\bar i}\big\}
   \geq \,& 2 
                  \fg_{i\bar i} \mathfrak{Re}\big\{u_i\eta_{\bar i}\big\}
          - \frac{1}{2} |\nabla u|^2 
             |\eta_i|^2  - C 
    \end{aligned}
\end{equation}
and
\begin{equation}
    \label{cmate-C0-7}
 \begin{aligned}
\sum_i S_{\alpha -1;i} (\fg^{i\bar i})^2 \eta_{i \bar i}
        \,&   + \frac{1}{2} \sum_i S_{\alpha -1;i} (\fg^{i\bar i})^2 |\eta_i|^2 \\
  \leq \,& - \frac{2}{|\nabla u|^2} \sum_i  S_{\alpha -1; i} \fg^{i\bar i}
             \mathfrak{Re}\big\{u_i\eta_{\bar i}\big\} + \frac{C}{\phi} \\
         + C \Big(\frac{1}{\phi} \,& + \frac{1}{|\nabla u|^2}\Big)
             \sum_i S_{\alpha -1;i} (\fg^{i\bar i})^2.
    \end{aligned}
 \end{equation}

For $N > 0$ sufficiently large so that Lemma~\ref{cmate-2alternative} holds,
we consider two cases: ({\bf a}) $w > N$ and ({\bf b}) $w \leq N$.
Without loss of generality we can assume that $|\nabla u| > |\nabla \underline u|$
at $p$ or otherwise we are done.
Note that
\begin{equation}
    \label{cmate-C0-8}
- \frac{2}{|\nabla u|^2} \sum_i  S_{\alpha -1; i} \fg^{i\bar i}
                   \mathfrak{Re}\big\{u_i\eta_{\bar i}\big\}
\leq 4  \sum_i  S_{\alpha -1; i} \fg^{i\bar i} = 4 \alpha S_\alpha.
\end{equation}

In case ({\bf a}) we have by Lemma~\ref{cmate-2alternative}
\begin{equation}
    \label{cmate-C1-20}
\sum_i  S_{\alpha -1;i} (\fg^{i\bar i})^2 \eta_{i \bar i}
     \geq \theta +  \theta \sum_i  S_{\alpha -1;i} (\fg^{i\bar i})^2.
\end{equation}
So if  $S_{\alpha -1;i} (\fg^{i\bar i})^2 \geq K$ for some $i$ and
$K$ sufficiently large
we derive a bound $|\nabla u| \leq C$ from \eqref{cmate-C0-7} and \eqref{cmate-C0-8}
when $A$ is sufficiently large.

Suppose that $S_{\alpha -1;i} (\fg^{i\bar i})^2 \leq K$ for all $i$
and assume $\fg_{1\bar 1} \leq \cdots \leq \fg_{n\bar n}$.
Note that
\[ 
   \prod_{i = 1}^{\alpha} \fg^{i\bar i} \geq \frac{S_{\alpha}}{C_n^{\alpha}}
    = \frac{1}{\psi}. \]
We have
\[ \frac{\fg^{1\bar 1}}{\psi}
   \leq (\fg^{1\bar 1})^2 \prod_{i = 2}^{\alpha} \fg^{i\bar i}
   \leq S_{\alpha -1;1} (\fg^{1\bar 1})^2 \leq K. \]
Therefore, for all $1 \leq i \leq n$,
\[ S_{\alpha -1;i} \leq C_n^{\alpha -1} (\fg^{1\bar 1})^{\alpha -1}
   \leq C_n^{\alpha -1} (K \psi)^{\alpha -1} \leq K'.  \]
   By Schwarz inequality,
\begin{equation}
    \label{cmate-C0-8'}
   \begin{aligned}
- 2 \sum_i  S_{\alpha -1; i} \fg^{i\bar i}
                   \mathfrak{Re}\big\{u_i\eta_{\bar i}\big\}
    \leq \,& 4 \sum_i  S_{\alpha -1; i}
 + \frac{1}{4} |\nabla u|^2 \sum_i  S_{\alpha -1; i} (\fg^{i\bar i})^2 |\eta_i|^2 \\
 \leq \,& \frac{1}{4} |\nabla u|^2 \sum_i  S_{\alpha -1; i} (\fg^{i\bar i})^2 |\eta_i|^2
  + C.
  \end{aligned}
\end{equation}
From \eqref{cmate-C0-7}, \eqref{cmate-C1-20} and \eqref{cmate-C0-8'} we obtain
\[ \frac{\theta}{C} - \frac{1}{\phi} - \frac{1}{|\nabla u|^2} \leq 0. \]
This gives a bound for $|\nabla u|$ when $A$ is chosen sufficiently large.

In case ({\bf b}) we have 
\begin{equation}
    \label{cmate-C0-F5}
    \begin{aligned}
 \sum_i  S_{\alpha -1;i} (\fg^{i\bar i})^2 |\eta_i|^2
    \,& \geq |\nabla \eta|^2 \min_i S_{\alpha -1;i} (\fg^{i\bar i})^2
     \geq \frac{|\nabla \eta|^2}{w^{\alpha + 1}}
      \geq \frac{|\nabla \eta|^2}{N^{\alpha + 1}}.
    \end{aligned}
\end{equation}
Substituting this into (\ref{cmate-C0-7}), we derive from
\eqref{cmate-C0-8} and \eqref{cmate-eq151},
\begin{equation}
    \label{cmate-C0-F6}
    \begin{aligned}
    \frac{|\nabla \eta|^2}{2N^{\alpha + 1}}
   \leq \,&  5 \alpha S_\alpha
   + \frac{C}{\phi} + \Big(\frac{C}{\phi} + \frac{C }{|\nabla u|^2} - \epsilon\Big)
      \sum_i S_{\alpha -1;i} (\fg^{i\bar i})^2.
    \end{aligned}
\end{equation}
This gives a bound $|\nabla u|\leq C$.
\end{proof}

\bigskip

\section{Boundary estimates for second derivatives}
\label{gblq-B}
\setcounter{equation}{0}

\medskip

In this section we prove Theorem~\ref{gl-thm20bc2}. 
Throughout this section we assume that $\varphi$ is extended smoothly to $\bM$
and that $\ul{u} \in C^2 (\bM)$ is a subsoltuion satisfying \eqref{CH-I20b}.
As in \cite{GL10} and \cite{GL3} we follow the idea of \cite{GS93}, 
\cite{Guan98a},
\cite{Guan98b} to use $\ul{u} - u$ in construction of barrier functions.

To derive \eqref{cma-37} let us
consider a boundary point $0 \in \partial M$.
We use  coordinates around $0$ such that
$\frac{\p}{\p x_n}$ is the interior normal direction to
$\partial M$ at 0 and $g_{i\bj} (0) = \delta_{ij}$.
For convenience we set
\[ 
t_{2k-1} = x_k,\;  t_{2k} = y_k, \, 1 \leq k \leq n-1; \;
  t_{2n-1} =  y_n, \; t_{2n} = x_n . \]

Since $u - \varphi = 0$ on $\partial M$, one derives
\begin{equation}
\label{cma-70}
|u_{t_{\alpha} t_{\beta}}(0)| \leq C, \;\;\;\; \alpha, \beta < 2n
\end{equation}
where $C$ depends on $|u|_{C^1 (\bM)}$, $|\ul{u}|_{C^1 (\bM)}$, and
geometric quantities of $\partial M$.

To estimate $u_{t_{\alpha} x_n} (0)$ for $\alpha \leq 2n$,
we shall employ a barrier function of the form
\begin{equation}
\label{cma-E85}
v = (u - \ul{u}) + t \sigma - T \sigma^2
\;\; \mbox{in $\Omega_{\delta} = M \cap B_{\delta}$}
\end{equation}
where $t, T$ are positive constants to be determined,
$B_{\delta}$ is the
(geodesic) ball of radius $\delta$ centered at $p$,
and $\sigma$ is the distance function to $\partial M$.
Note that $\sigma$ is smooth in
$M_{\delta_0} := \{z \in M: \sigma (z) < \delta_0\}$
for some $\delta_0 > 0$.

\begin{lemma}
\label{cma-lemma-20}
There exists $c_0 > 0$ such that for $T$ sufficiently large and
$t, \delta$ sufficiently small,
$v \geq 0$ and
\begin{equation}
\label{eq-100}
 \sum_{i,j} F^{i\bj} v_{i\bj}
    \leq - c_0 \Big(1 + \sum_{i,j} F^{i\bj} g_{i\bj}\Big)
   \;\; \mbox{in} \;\; \Omega_{\delta}.
\end{equation}
\end{lemma}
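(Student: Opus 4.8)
The plan is to compute $\sum_{i,j}F^{i\bj}v_{i\bj}$ in the adapted coordinates at a point of $\Omega_\delta$, exploiting that $v=(u-\ul u)+t\sigma-T\sigma^2$ is built from the difference $u-\ul u$, for which Lemma~\ref{cmate-2alternative} gives the crucial negativity, plus the lower-order terms $t\sigma$ and $-T\sigma^2$ that we can control. First I would diagonalize, assuming $g_{i\bj}=\delta_{ij}$ and $\fg_{i\bj}$ diagonal at the point in question, and write
\[
\sum_{i,j} F^{i\bj} v_{i\bj}
 = \sum_i F^{i\bi}(u_{i\bi}-\ul u_{i\bi}) + t\sum_i F^{i\bi}\sigma_{i\bi}
   - 2T\sum_i F^{i\bi}|\sigma_i|^2 - 2T\sigma\sum_i F^{i\bi}\sigma_{i\bi}.
\]
The first term is $-\sum_i F^{i\bi}(\ul u_{i\bi}-u_{i\bi})$; by the equivalent form \eqref{GSn-P50'} of Lemma~\ref{cmate-2alternative}, whenever $w\ge N$ this is $\le -\theta\sum_i F^{i\bi}g_{i\bi}-\theta$, which already dominates the right-hand side of \eqref{eq-100} since the remaining three terms are bounded by $C(t+T\delta)(1+\sum_i F^{i\bi}g_{i\bi})$: the $t\sum F^{i\bi}\sigma_{i\bi}$ and $-2T\sigma\sum F^{i\bi}\sigma_{i\bi}$ terms are each $O((t+T\delta)\sum F^{i\bi}g_{i\bi})$ because $\sigma\le\delta$ on $\Omega_\delta$, $|\nabla^2\sigma|\le C$, and $\tr F\le C\sum F^{i\bi}g_{i\bi}$; the $-2T\sum F^{i\bi}|\sigma_i|^2$ term is nonpositive and can only help. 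So in the region $w\ge N$ we choose $t,\delta$ small (relative to $\theta$) and are done regardless of $T$.

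The complementary case $w\le N$ is where $T$ does the work. Here $\fg_{i\bi}\le w\le N$ for every $i$, so the eigenvalues $\fg^{i\bi}=1/\fg_{i\bi}$ are bounded below, whence $\{F^{i\bj}\}$ is bounded below: $F^{i\bi}\ge c/N^{?}>0$ for a uniform $c$ (equivalently $\sum_i F^{i\bi}g_{i\bi}$ and $\sum_i F^{i\bi}|\xi_i|^2$ are comparable, uniformly). Since $\sigma$ is the distance to a smooth boundary, $|\nabla\sigma|\equiv 1$, so $\sum_i F^{i\bi}|\sigma_i|^2 = \tfrac14 F^{i\bj}(\p_i\sigma)(\bpartial_j\sigma)\ge c'>0$ uniformly. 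Then $-2T\sum_i F^{i\bi}|\sigma_i|^2\le -2Tc'$, and taking $T$ large this beats all the other (now uniformly bounded, since $F^{i\bj}$ is bounded above and below) contributions, giving $\sum_{i,j}F^{i\bj}v_{i\bj}\le -c_0(1+\sum_{i,j}F^{i\bj}g_{i\bj})$. The order of quantifiers is: pick $N$ from Lemma~\ref{cmate-2alternative}; on $\{w\ge N\}$ choose $t,\delta$ small using $\theta$; on $\{w\le N\}$ choose $T$ large using the uniform ellipticity bounds there; finally shrink $\delta$ and $t$ once more so that the $O((t+T\delta))$ perturbation in the first case stays absorbed with the now-fixed $T$. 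The nonnegativity $v\ge0$ on $\Omega_\delta$ is standard: $u-\ul u\ge 0$? — not in general, but $u-\ul u$ together with $t\sigma-T\sigma^2\ge -T\delta^2$ and the Hopf-type comparison on $\partial M$ where $u=\ul u=\varphi$; more precisely on $\p M\cap B_\delta$ one has $v=0$, on $\p B_\delta\cap M$ one uses $|u-\ul u|\le C|x|\le C\delta$ and chooses $t$ with $t\sigma\ge C\delta\ge -(u-\ul u)+T\sigma^2$ there after possibly shrinking, so $v\ge0$ on $\p\Omega_\delta$, and then the differential inequality \eqref{eq-100} plus the maximum principle forces $v\ge0$ throughout.

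The main obstacle is bookkeeping the competition of the constants $t,T,\delta$ so that the two cases are handled by a single choice: the tension is that $w\ge N$ wants $t,T\delta$ small while $w\le N$ wants $T$ large, which is resolved because in the second case $\delta$ may be taken as small as we like (it is $T\delta$, not $T$, that must be small in the first case). A secondary technical point is the uniform two-sided ellipticity bound on $\{F^{i\bj}\}$ when $w\le N$ — this uses that $S_\alpha(\lambda^*(\fg^{i\bj}))=C_n^\alpha\psi^{-1}$ is pinched between positive constants and $\fg^{i\bi}\in[1/N,\,?]$, so the $\lambda^*$ lie in a compact subset of the positive cone where $F$ and its derivatives are smooth and the $F^{i\bj}$ stay between fixed positive multiples of the identity; I would state this as a short lemma or cite the analogous estimate in \cite{GL10}.
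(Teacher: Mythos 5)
Your derivation of the differential inequality \eqref{eq-100} is essentially the paper's argument: the same expansion of $\sum F^{i\bj}v_{i\bj}$, the same two-case split at the threshold $N$ coming from Lemma~\ref{cmate-2alternative}, and the same use of $-2T\sum F^{i\bj}\sigma_i\sigma_{\bj}$ as the dominant negative term when $w\le N$ via the uniform two-sided ellipticity of $\{F^{i\bj}\}$ in that region. The paper organizes the quantifiers a bit more cleanly — it fixes $T$ large first using the uniform ellipticity in case $w\le N$ together with homogeneity of $F$ and the bound $\chi_{\ul u}\ge\epsilon\omega$, and only then shrinks $t$ and $\delta$ so that $C_1(t+T\delta)\le\min(\epsilon/2,\theta/2)$, which handles both cases with a single choice — but this is a presentational difference, not a substantive one, and your iterated shrinking accomplishes the same thing.

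There is, however, a genuine gap in your handling of $v\ge 0$. You dismiss $u-\ul u\ge 0$ with ``not in general,'' but in fact it does hold: $\ul u$ is an admissible subsolution with the same boundary data as the admissible solution $u$, so the comparison principle for the elliptic operator $F$ gives $u\ge\ul u$ in $M$. This is exactly what the paper uses (implicitly) when it says $v\ge 0$ follows for $\delta$ small: one has $u-\ul u\ge 0$ and $t\sigma-T\sigma^2=\sigma(t-T\sigma)\ge 0$ provided $\delta\le t/T$. Your replacement argument does not work. On $\partial B_\delta\cap M$ the distance $\sigma$ ranges over $(0,\delta]$ and can be arbitrarily small near the corner $\partial M\cap\partial B_\delta$, so the inequality $t\sigma\ge C\delta$ fails there for any fixed small $t$. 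Even using the sharper bound $|u-\ul u|\le C\sigma$ near $\partial M$ one would need $t\ge C+T\sigma$, i.e.\ $t\ge C$, which contradicts $t$ being small. In short, $v\ge 0$ cannot be salvaged from a one-sided barrier argument with small $t$ without the comparison-principle fact $u\ge\ul u$; the maximum-principle detour you propose (using \eqref{eq-100} itself to conclude $v\ge 0$) collapses at the same step, since it requires $v\ge 0$ on $\partial\Omega_\delta$ to begin with.
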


\begin{proof}
The proof is very similar to that of Lemma~5.1 in \cite{GL3}; for
completeness we include it here.
First of all, since $\sigma$ is smooth and $\sigma = 0$ on $\partial M$,
for fixed $t$ and $T$ we may require $\delta$ to be so small that
$v \geq 0$ in $\Omega_{\delta}$. Next, note that
\[ \sum_{i,j} F^{i\bj} \sigma_{i\bj} \leq C_1 \sum_{i,j} F^{i\bj}  g_{i\bj} \]
for some constant $C_1 > 0$ under control. Therefore,
\begin{equation}
\label{eq-110}
  \sum_{i,j} F^{i\bj} v_{i\bj} \leq \sum_{i,j} F^{i\bj} (u_{i\bj} -\ul{u}_{i\bj})
    + C_1 (t + T \sigma) \sum_{i,j} F^{i\bj} g_{i\bj}
    - 2 T \sum_{i,j} F^{i\bj} \sigma_i \sigma_{\bj}.
\end{equation}

Fix $N > 0$ sufficiently large so that Lemma~\ref{cmate-2alternative}
holds. At a fixed point in $\Omega_{\delta}$, we consider two cases:
(a) $w \leq N$ and (b) $w > N$.

In case (a) let $\lambda_1 \leq \cdots \leq \lambda_n$ be
the eigenvalues of $\{\fg_{i\bj}\}$. 
We see from equation~\eqref{cmate-M10'}
that there is a uniform lower bound $\lambda_1 \geq c_1 > 0$.
Consequently, $c_2 I \leq \{F^{i\bj}\} \leq \frac{1}{c_2} I$ for some
constant $c_2 > 0$ depending on $N$ and $c_1$, and hence
\begin{equation}
\label{cma-E95}
\sum_{i,j} F^{i\bj} \sigma_i \sigma_{\bj} \geq c_2 |\nabla \sigma|^2 = \frac{c_2}{4}.
\end{equation}
Since $F$ is homogeneous of degree one, by \eqref{eq-110}, \eqref{cma-E95}
and \eqref{cmate-eq151},
\begin{equation}
\label{cma-E90}
\sum_{i,j} F^{i\bj} v_{i\bj} \leq F (\fg_{i\bj})
      + (C_1 (t + T \sigma) - \epsilon) \sum_{i,j} F^{i\bj} g_{i\bj} - \frac{c_2 T}{2}
  \leq - \frac{\epsilon}{2} \Big(1 + \sum_{i,j} F^{i\bj} g_{i\bj}\Big)
\end{equation}
if we fix $T$ sufficiently large and require $t$ and $\delta$ small to satisfy
$C_1 (t + T \delta) \leq \epsilon/2$.

Suppose now that $w > N$.
By Lemma~\ref{cmate-2alternative} and \eqref{eq-110}, we may further
require $t$ and $\delta$ to satisfy $C_1 (t + T \delta) \leq \theta/2$
so that \eqref{eq-100} holds.
\end{proof}

Using Lemma~\ref{cma-lemma-20} we may derive as in \cite{GL10} (but see \cite{GL3} for
 some corrections) the estimates $|u_{t_{\alpha} x_n} (0)| \leq C$ (and therefore $|u_{x_n t_{\alpha}} (0)| \leq C$) for $\alpha < 2n$; we shall omit the proof here.
 It remains to prove
\begin{equation}
    \fg_{n\bar n} (0) \leq C .
\end{equation}
The proof below uses an idea of Trudinger~\cite{Trudinger95}.

Let $T_C \partial M$ be the complex tangent bundle and
\[ T^{1,0}\partial M = T^{1,0} M \bigcap T_C \partial M
                     = \{\xi\in T^{1,0}M : d \sigma(\xi) = 0\}. \]
Let $\hat \chi_u$ and $\hat \omega$ denote the restrictions to $T_C \partial M$ of $\chi_u$
and $\omega$ respectively. As in \cite{GL3} we only have to show that
$$
m_0 := \min_{\partial M} \frac{n \hat x^{n-1}_u}{\psi (n - \alpha) \hat \chi^{n - \alpha - 1}_u \wedge \hat \omega^\alpha} > 1 .
$$

Suppose that $m_0$ is reached at a point $0 \in \partial M$. 
Let $\tau_1, \cdots, \tau_{n-1}$ be a local frame of vector fields in $T^{1,0}_C \partial M$ around 0 such that $g(\tau_\beta, \bar\tau_\gamma) = \delta_{\beta\gamma}$ for $1 \leq \beta, \gamma \leq n - 1$ and
$\tau_\beta = \frac{\partial} {\partial z_{\beta}}$ at $0$.
We extend  $\tau_1, \ldots, \tau_{n-1}$
 by their parallel transports
along geodesics normal to $\partial M$ so that they are smoothly defined in
a neighborhood of $0$.  Denote
$\tilde{u}_{\beta \bar{\gamma}} = u_{\tau_{\beta} \bar{\tau}_{\gamma}}$
and $\tilde{\fg}_{\beta \bar{\gamma}} = \tilde{u}_{\beta \bar{\gamma}}
+ \chi (\tau_{\beta}, \bar{\tau}_{\gamma})$, $1 \leq \beta, \gamma \leq n - 1$,
etc.
On $\partial M$ we have
\begin{equation}
  \frac{n\hat \chi^{n-1}_u}{\psi(n-\alpha)\hat\chi^{n-\alpha-1}_u \wedge \hat\omega^\alpha}
  = \frac{C^\alpha_n}{\psi} \frac{S_{n-1} (\tilde{\fg}_{\beta \bar{\gamma}})}
      {S_{n-\alpha-1}(\tilde{\fg}_{\beta \bar{\gamma}})}.
\end{equation}

Define, for a positive definite $(n-1)\times(n-1)$ Hermitian matrix $\{r_{\beta\bar\gamma}\}$,
$$
G[r_{\beta \bar\gamma}] := \left(\frac{S_{n-1}(\lambda (r_{\beta \bar\gamma}))}
      {S_{n-\alpha-1} (\lambda (r_{\beta \bar\gamma}))}\right)^{\frac{1}{\alpha}},
$$
where $\lambda (r_{\beta \bar\gamma})$ denotes the ordinary eigenvalues of 
$\{r_{\beta \bar\gamma}\}$ (with respect to the identity matrix $I$),
 and let
$$
G^{\beta\bar\gamma}_0
    = \frac{\partial G}{\partial r_{\beta \bar\gamma}} [\fg_{\beta \bar\gamma}(0)] .
$$
Note that $G$ is concave and homogeneous of degree one. Therefore,
\begin{equation}
\label{cma-701}
 \sum_{\beta,\gamma < n} G^{\beta \bar\gamma}_0 r_{\beta \bar\gamma}
   \geq G [r_{\beta \bar\gamma}]
\end{equation}
for any $\{r_{\beta \bar\gamma}\}$. In particular, since
$u_{\beta \bar\gamma}(0) = \underline u_{\beta \bar\gamma} (0)
    + (u- \underline u)_{x_n} (0) \sigma_{\beta\bar\gamma} (0)$, we have
\begin{equation}
\label{cma-702}
\begin{aligned}
 G[\fg_{\beta \bar\gamma} (0)]
   = \,& \sum_{\beta,\gamma < n} G^{\beta \bar\gamma}_0 \fg_{\beta \bar\gamma}(0) \\
   = \,& \sum_{\beta,\gamma < n} G^{\beta \bar\gamma}_0
      (\chi_{\beta \bar\gamma}(0) + \underline u_{\beta \bar\gamma}(0))
      + (u - \underline u)_{x_n} (0)
      \sum_{\beta,\gamma < n} G^{\beta \bar\gamma}_0 \sigma_{\beta \bar\gamma}(0).
      \end{aligned}
\end{equation}

We shall need the following elementary lemma. 

\begin{lemma}
\label{cma-lemma-30}
Let 
\[ A = \left[ \begin{aligned} B \;\;\; & \; C \\ 
                         \bar{C}' \;\; & a_{n\bn} \end{aligned} \right] \]
be a positive definite Hermitian matrix. Then
\begin{equation}
\label{eq-101}
G^{\alpha} (B) \geq (1+ c_0) \frac{S_{n}(\lambda (A))}
      {S_{n-\alpha} (\lambda (A))}
\end{equation}
where $c_0 > 0$ depends on the lower and upper bounds of the eigenvalues of $A$.
\end{lemma}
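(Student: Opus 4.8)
\textbf{Proof plan for Lemma~\ref{cma-lemma-30}.} The statement is a purely algebraic comparison between the Hessian-type quotient $S_n/S_{n-\alpha}$ evaluated on the full $n\times n$ matrix $A$ and the analogous quotient $G^\alpha = S_{n-1}/S_{n-\alpha-1}$ evaluated on the principal $(n-1)\times(n-1)$ block $B$. The plan is to reduce to a diagonal computation and then use a Schur-complement (or block-matrix) inequality together with monotonicity of elementary symmetric functions. First I would diagonalize: since we may simultaneously conjugate $A$ and $B$ appropriately, it suffices to treat the case where $B = \mathrm{diag}(\mu_1,\dots,\mu_{n-1})$ is diagonal, leaving $A$ in the form with diagonal $B$-block, column $C$, and corner $a_{n\bn}$. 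Writing $\lambda_1,\dots,\lambda_n$ for the eigenvalues of $A$, the key point is that $a_{n\bn} - \bar C' B^{-1} C$ is the Schur complement, which is positive (since $A>0$) and bounded above and below in terms of the eigenvalue bounds of $A$; moreover $\prod \lambda_i = \det A = \det B \cdot (a_{n\bn} - \bar C' B^{-1}C)$.

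The main computational step is to express $S_n(\lambda(A))/S_{n-\alpha}(\lambda(A))$ in terms of $B$ and the Schur complement $s := a_{n\bn} - \bar C' B^{-1}C$. For the numerator we have $S_n(\lambda(A)) = \det A = s\, S_{n-1}(\mu)$. For the denominator one expands $S_{n-\alpha}(\lambda(A))$ by the last row/column; the dominant term is $s\, S_{n-\alpha-1}(\mu)$, but there are additional nonnegative contributions from the off-diagonal entries $C$, which can only \emph{increase} $S_{n-\alpha}(\lambda(A))$ relative to the block-diagonal case — this is precisely where the strict gap $1+c_0$ will come from, or alternatively from the fact that for a \emph{block-diagonal} matrix with a corner $a_{n\bn}$ bounded above, $S_{n-\alpha}$ strictly exceeds $s\,S_{n-\alpha-1}$ by a definite amount. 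Hence
\[
\frac{S_n(\lambda(A))}{S_{n-\alpha}(\lambda(A))}
  = \frac{s\,S_{n-1}(\mu)}{s\,S_{n-\alpha-1}(\mu) + (\text{nonneg.\ terms})}
  \le \frac{S_{n-1}(\mu)}{S_{n-\alpha-1}(\mu) + \delta}
\]
for some $\delta > 0$ controlled by the eigenvalue bounds, and the right-hand side is $\le (1+c_0)^{-1} G^\alpha(B)$ after choosing $c_0$ appropriately; rearranging gives \eqref{eq-101}.

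I expect the main obstacle to be locating the strict improvement $c_0$ rather than merely the non-strict inequality $G^\alpha(B) \ge S_n/S_{n-\alpha}$. The non-strict version follows quickly from the concavity/homogeneity inequality \eqref{cma-701} applied with $r_{\beta\bar\gamma}$ chosen as the eigenvalues of $A$ together with $\lambda_n$, or from a direct interlacing argument; but to get a \emph{uniform} $c_0 > 0$ one must exploit both the upper bound on the eigenvalues of $A$ (so that the corner/Schur complement cannot degenerate, keeping the ``extra'' term $\delta$ bounded below) and the lower bound (so that $S_{n-\alpha-1}(\mu)$ and $S_{n-1}(\mu)$ stay comparable and positive, and the quotient does not blow up). A clean way to organize this is by a compactness argument: the set of positive definite $A$ with eigenvalues in a fixed compact interval $[c_1, c_1^{-1}]$ is compact, the ratio $G^\alpha(B) \big/ \big(S_n(\lambda(A))/S_{n-\alpha}(\lambda(A))\big)$ is continuous and strictly greater than $1$ at every such $A$ by the non-strict inequality plus the observation that equality in \eqref{cma-701} or in the Schur-complement step forces $C = 0$ and $a_{n\bn} = \lambda_n$ which is impossible when $a_{n\bn}$ is strictly below the top eigenvalue; hence the ratio attains a minimum $1 + c_0 > 1$. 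I would present the direct Schur-complement computation as the main line and mention the compactness argument as the way to extract $c_0$.
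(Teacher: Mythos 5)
Your proposal follows essentially the same Schur-complement route as the paper. The paper diagonalizes $B$, writes $\det A = s\,\det B$ with $s = a_{n\bn} - \bar{C}' B^{-1}C$, and proves the inequality $S_{n-\alpha}(\lambda(A)) \geq s\,S_{n-\alpha-1}(\mu) + S_{n-\alpha}(\mu)$ (where $\mu$ denotes the eigenvalues of $B$) minor by minor, via $\det A_J \geq s\,\det B_J$ for each principal submatrix $A_J$ containing the last index; the constant $c_0$ then comes from bounding $s \le a_{n\bn}$ above and $S_{n-\alpha}(\mu)/S_{n-\alpha-1}(\mu)$ below using the eigenvalue pinching of $A$ together with interlacing of the eigenvalues of $B$.

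One claim in your description is backwards and should be corrected. Nonzero $C$ \emph{decreases}, rather than increases, $S_{n-\alpha}(\lambda(A))$ relative to the block-diagonal case: each partial Schur complement $a_{n\bn} - \bar{C_J}' B_J^{-1} C_J$ shrinks as $|C|$ grows, so the principal minors of $A$ containing the $n$-th index only get smaller. The strict gap $c_0$ therefore does not come from $C$ at all; it comes from the principal minors of $A$ that \emph{omit} the $n$-th index, which contribute the additive term $S_{n-\alpha}(\mu) > 0$ beyond the lower bound $s\,S_{n-\alpha-1}(\mu)$. Your "alternative" formulation (block-diagonal matrix with a bounded corner) is the one that correctly captures this. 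The compactness fallback you mention would also produce some $c_0$, but for a simpler reason than the one you state: the ratio exceeds $1$ at every such $A$ because $S_{n-\alpha}(\mu) > 0$, not because equality would force "$C = 0$ and $a_{n\bn} = \lambda_n$" — indeed the ratio is strictly greater than $1$ even when $C = 0$. The paper's explicit computation is preferable in any case, since it produces a concrete $c_0$ rather than one obtained abstractly by compactness.
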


\begin{proof}
It is straightforward to verify that
\[ \left[ \begin{aligned} I \;\;\;\; & 0 \\ 
                \bar{C}' B^{-1} \;\; & 1 \end{aligned} \right] \;
     \left[ \begin{aligned} B \;\;\; & \; C \\ 
                       \bar{C}' \;\; & a_{n\bn} \end{aligned} \right] \;
       \left[ \begin{aligned} I \;\; & B^{-1} C \\ 
                              0 \;\; & \;\;\; 1 \end{aligned} \right]
     = \left[ \begin{aligned} B \;\; & \;\;\;\;\;\; 0 \\ 
               \;\; 0 \;\; & a_{n\bn} - \bar{C}' B^{-1} C \end{aligned} \right]. \]
So 
\[ \det A 
          = (a_{n\bn} - \bar{C}' B^{-1} C) \det B. \]
We now claim 
\[ S_{n-\alpha} (\lambda (A)) 
   \geq (a_{n\bn} - \bar{C}' B^{-1} C) S_{n-\alpha-1} (\lambda (B))
         + S_{n-\alpha} (\lambda (B)). \]
To see this we can assume $B$ is diagonal and consider a submatrix of $A$ of the form
\[ A_J = \left[ \begin{aligned} B_J \;\;\; & \; C_J \\
                         \bar{C_J}' \;\; & a_{n\bn} \end{aligned} \right]. \]
We have 
\[ \bar{C}' B^{-1} C \geq \bar{C_J}' B_J^{-1} C_J \geq 0 \]
since $B$ is positive definite and $\bar{C}'$ is the conjugate transpose of $C$.
Therefore,
\[ \det A_J = (a_{n\bn} - \bar{C_J}' B_J^{-1} C_J) \det B_J
            \geq (a_{n\bn} - \bar{C}' B^{-1} C) \det B_J. \]
The claim and \eqref{eq-101} now follow easily. 
\end{proof}

We continue the proof of \eqref{cma-37}. 
Suppose that for some small $\theta_0 > 0$ to be determined later,
$$
- \sum_{\beta,\gamma < n} (u - \underline u)_{x_n} (0)
        G^{\beta \bar\gamma}_0 \sigma_{\beta \bar\gamma}(0)
   \leq \theta_0 \sum_{\beta,\gamma < n} G^{\beta\bar\gamma}_0 (\chi_{\beta\bar\gamma}(0) + \underline u_{\beta\bar\gamma}(0)).
$$
Then,
\begin{equation}
    \begin{aligned}
    G[\fg_{\beta\bar\gamma}(0)]
    \geq \,& (1 - \theta_0) \sum_{\beta,\gamma < n} G^{\beta\bar\gamma}_0
              (\chi_{\beta\bar\gamma}(0) + \underline u_{\beta\bar\gamma}(0)) \\
    \geq \,& (1 - \theta_0) \,
             G[\chi_{\beta \bar\gamma}(0) + \underline u_{\beta \bar\gamma}(0)] \\
     \geq \,& (1 - \theta_0) (1 + c_0) F (\chi_{\ul u}) \\
      \geq \,& (1 - \theta_0) (1 + c_0) 
      \left(\frac{\psi(0)}{C^\alpha_n}\right)^{\frac{1}{\alpha}}.
    \end{aligned}
\end{equation}
The second and fourth inequalities follow  from \eqref{cma-701}
and \eqref{CH-I20b}, respectively,
while the third from Lemma~\ref{cma-lemma-30}. 
Choosing $\theta_0$ small enough, we obtain
$$
m_0 = \frac{C_n^\alpha}{\psi(0)}  G[\fg_{\beta\bar\gamma}(0)]^{\alpha}
    \geq 1 + \frac{\theta_0}{2}.
$$

Suppose now that
$$
- (u - \underline u)_{x_n} (0)
     \sum_{\beta,\gamma < n} G^{\beta \bar\gamma}_0 \sigma_{\beta \bar\gamma} (0)
   > \theta_0 \sum_{\beta,\gamma < n}  G^{\beta \bar\gamma}_0
      (\chi_{\beta \bar\gamma}(0) + \underline u_{\beta \bar\gamma}(0)).
$$
On $\partial M$,
$\tilde u_{\beta\bar\gamma} = \tilde \varphi_{\beta\bar\gamma} 
    + (u - \varphi)_\nu \tilde \sigma_{\beta\bar\gamma}$
where
\begin{equation*}
    \nu = \sum^{2n}_{k = 1} \nu^k \frac{\p}{\p t_k}
\end{equation*}
is the interior unit normal vector field to $\partial M$.
We have $|\nu^k| \leq C\rho$ for $k < 2n$ and $|(u - \varphi)_{t_k}| \leq C \rho$
since $\nu^k (0) = 0$ for $k < 2n$ and $u = \varphi$ on $\partial M$. Define
\begin{equation}
    \begin{aligned}
 \varPhi = \,& \sum_{\beta,\gamma < n} G^{\beta \bar\gamma}_0
            (\tilde \chi_{\beta \bar\gamma} + \tilde\varphi_{\beta \bar\gamma})
            + (u - \varphi)_{x_n} \nu^{2n} \sum_{\beta,\gamma < n}
            G^{\beta\bar\gamma}_0 \tilde \sigma_{\beta \bar\gamma}
            - \Big(\frac{m_0 \psi}{C^\alpha_n}\Big)^{\frac{1}{\alpha}} \\
      := \,& - (u - \varphi)_{x_n} \eta + Q
    \end{aligned}
\end{equation}
where $\eta$
 and Q are smooth. Note that $\varPhi(0) = 0$ and
\begin{equation}
   \begin{aligned}
 \eta (0)
  = - \,& \nu^{2n} (0) \sum_{\beta,\gamma < n} G^{\beta \bar\gamma}_0
                          \sigma_{\beta \bar\gamma} (0) \\
  > \,& \frac{\theta_0}{(u - \ul u)_{x_n} (0)}
         \sum_{\beta,\gamma < n} G^{\beta \bar\gamma}_0
          (\chi_{\beta \bar\gamma}(0) + \ul u_{\beta \bar\gamma}(0)) \\
\geq \,& \frac{\theta (1 + \epsilon) \psi(0)}{C^\alpha_n (u - \underline u)_{x_n} (0) }
          \geq c_2 >  0.
       \end{aligned}
\end{equation}
On $\partial M$,
\begin{equation}
    \begin{aligned}
\varPhi 
 = \,& \sum_{\beta,\gamma < n}G^{\beta \bar\gamma}_0 \tilde \fg_{\beta \bar\gamma}
       - \sum_{k < 2n}(u - \varphi)_{t_k} \nu^{k}
     \sum_{\beta,\gamma < n} G^{\beta \bar\gamma}_0 \tilde \sigma_{\beta \bar\gamma}
            - \Big(\frac{m_0 \psi}{C^\alpha_n}\Big)^{\frac{1}{\alpha}} \\
  \geq &  \sum_{k < 2n}(u - \varphi)_{t_k} \nu^{k}
     \sum_{\beta,\gamma < n} G^{\beta \bar\gamma}_0 \tilde \sigma_{\beta \bar\gamma}
  \geq - C \rho^2
    \end{aligned}
\end{equation}
since by \eqref{cma-701}
\[ \sum_{\beta,\gamma < n}G^{\beta \bar\gamma}_0 \tilde \fg_{\beta \bar\gamma}
    \geq G [\tilde \fg_{\beta \bar\gamma}]
    \geq \Big(\frac{m_0 \psi}{C^\alpha_n}\Big)^{\frac{1}{\alpha}}. \]

We calculate
\begin{equation}
    \begin{aligned}
 & \sum_{i,j} F^{i\bar j} \varPhi_{i\bar j}
   \leq - \eta \sum_{i,j} F^{i\bar j}  (u_{x_n})_{i\bar j}
            + C \sum_{i,j} F^{i\bar j} g_{i\bar j}  \\
          + \sum_{i,j} F^{i\bar j} \,& (u - \varphi)_{x_n z_i} (u - \varphi)_{x_n \bz_j}.
\end{aligned}
\end{equation}
As in \cite{CKNS} (see also \cite{GL3}),
\[ \sum_{i,j} F^{i\bar j} (u - \varphi)_{x_n z_i} (u - \varphi)_{x_n \bz_j}
\leq \sum_{i,j} F^{i\bar j} (u - \varphi)_{y_n z_i} (u - \varphi)_{y_n \bz_j}
+  C \sum_{i,j} F^{i\bar j} g_{i\bar j} + C. \]
On the other hand, differentiating equation \eqref{cmate-M10} with respect to $x_n$, 
we see that
\begin{equation}
\label{cmate-bd-E1}
    \begin{aligned}
  - \sum_{i,j} F^{i\bar j} (u_{x_n})_{i\bar j}
  \leq \,& 2 \Big|\sum_{i,j,l} F^{i\bar j} \fg_{i\bar l} \overline{\Gamma^l_{nj}}\Big|
          + C \sum_{i,j} F^{i\bar j} g_{i\bar j} + C.
    \end{aligned}
\end{equation}
At a fixed point choose a unitary $A = \{a_{ij}\}_{n \times n}$ which diagonalizes
$\{\fg_{i\bj}\}$. We have
\begin{equation}
\label{cmate-bd-E2}
    \begin{aligned}
    \sum_{i,j,l} F^{i\bar j} \fg_{i\bar l} \overline{\Gamma^l_{nj}}
    = \,& \sum_{i,j,l,s,t,p,q} a^{is} f_s \delta_{st} \bar a^{jt} a_{ip} \lambda_p
        \delta_{pq} \bar a_{lq} \overline{\Gamma^l_{nj}} \\
    = & \sum_{q} f_q \lambda_q \sum_{j,l} \bar a^{jq} \bar a_{lq} \ol{\Gamma^l_{nj}}
      \leq C \psi .
\end{aligned}
\end{equation}\
Therefore, 
\begin{equation}
 - \sum_{i,j} F^{i\bar j} (u_{x_n})_{i\bar j}
  \leq C \sum_{i,j} F^{i\bar j} g_{i\bar j} + C.
\end{equation}

Applying Lemma~\ref{cma-lemma-20} we derive
\[ \sum_{i,j} F^{i\bj} (Av + B\rho^2 + \Phi - |(u - \varphi)_{y_n}|^2)_{i\bj}
    \leq 0 \;\; \mbox{in $M\cap B_\delta(0)$} \]
and $Av + B\rho^2 + \Phi - |(u - \varphi)_{y_n}|^2 \geq 0$ on
$\partial (M \cap B_\delta(0))$ when $A \gg B \gg 1$.
By the maximum principle,
$Av + B\rho^2 + \Phi - |(u - \varphi)_{y_n}|^2 \geq 0$ in $M\cap B_\delta(0)$,
and therefore $\Phi_{x_n}(0) \geq - C$. This gives
\[ u_{n\bar n} (0) \leq C. \]

We now have positive lower and upper bounds for all eigenvalues of $\{\fg_{i\bj} (0)\}$.
By Lemma~\ref{cma-lemma-30}, 
\[ G[\fg_{\beta \bar \gamma} (0))] \geq (1 + c_0) F (\fg_{i\bj} (0)) \]
for some $c_0 > 0$. It follows that
\[    m_0 = \frac{C_n^\alpha}{\psi(0)} G[\fg_{\beta \bar \gamma} (0))]
     \geq  1 + c_0. \]
The proof of \eqref{cma-37} is therefore complete.

\bigskip

\section{The second order estimates}
\label{cmate-C2}
\setcounter{equation}{0}

\medskip

\begin{proposition}
 Suppose $\chi \in \mathscr{C}_{\alpha} (\omega)$ and 
let $u \in C^4(M) \cap C^2(\bM)$ be a solution of equation \eqref{CH-I10}.
Then there is a uniform constant $C>0$ such that
\begin{equation}
\label{cmate-C2-1}
\sup_{\ol M} \Delta u \leq C(1 + \sup_{\partial M} \Delta u).
\end{equation}
\end{proposition}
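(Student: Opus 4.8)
The plan is to apply the maximum principle to the quantity $W = \log w + \varphi(|\nabla u|^2) - \kappa(u - \underline u)$ (or simply $W = \log w - \kappa(u - \underline u)$ after the gradient estimate \eqref{cmate-C1-1} is in hand, since that estimate lets us absorb first order terms), where $w = \tr\chi + \Delta u$ and $\kappa > 0$ is a large constant to be chosen. Suppose $W$ attains its maximum over $\bM$ at an interior point $p \in M$; at a boundary point the estimate \eqref{cmate-C2-1} is what we wish to prove, so there is nothing to do. Choose coordinates at $p$ so that $g_{i\bj} = \delta_{ij}$ and $\{\fg_{i\bj}\}$ is diagonal, with $\fg_{1\bar1} \leq \cdots \leq \fg_{n\bar n}$. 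Then apply the linearized operator $\mathcal{L} := \sum_{i,j} F^{i\bj} \partial_i\bpartial_j = \sum_i S_{\alpha-1;i}(\fg^{i\bi})^2 \partial_i\bpartial_i$ (up to the normalizing constant) to $W$ and use $\mathcal{L}W \leq 0$, $\partial_i W = 0$ at $p$.

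The key steps, in order: first, commute derivatives to express $\mathcal{L}(\Delta u)$ in terms of $\sum_k (\fg_{k\bar k})_{i\bar i k\bar k}$; the identity \eqref{gblq-R155} converts $\fg_{k\bar k i\bar i}$ into $\fg_{i\bar i k\bar k}$ plus curvature and torsion terms, the worst of which is the third-order term $2\fRe\{\ol{T_{ik}^j}\fg_{i\bj k}\}$. Second, differentiate the equation \eqref{cmate-M10} once, as in \eqref{cmate-eq148}, and twice, as in \eqref{cmate-eq149'}; the crucial positivity \eqref{glz}--\eqref{cmate-eq149} supplies the good third-order term $\sum_{i,j} S_{\alpha-1;i}(\fg^{i\bi})^2\fg^{j\bar j}|\fg_{i\bj l}|^2$ on the right side, which we will use to dominate the torsion cross-terms via Cauchy--Schwarz. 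Third, the term $-\kappa\mathcal{L}(u - \underline u) = \kappa\sum_i S_{\alpha-1;i}(\fg^{i\bi})^2(\underline u_{i\bi} - u_{i\bi})$ is handled by Lemma~\ref{cmate-2alternative}: when $w \geq N$ this is bounded below by $\theta\kappa\sum_i S_{\alpha-1;i}(\fg^{i\bi})^2 + \theta\kappa$, giving us a large positive multiple of $\sum_i F^{i\bi}$ to absorb all the $O(1)\sum_i F^{i\bi}$ error terms; the case $w < N$ is trivial since then $\Delta u$ is already bounded. Finally, after dividing by $w$ (from $\log w$) and collecting terms, one arrives at an inequality of the form $\theta\kappa \sum_i F^{i\bi} + \theta\kappa/w \leq C\sum_i F^{i\bi} + C$, which for $\kappa$ large forces $w \leq C$ at $p$, hence $\sup_M \Delta u \leq C$; combined with the boundary contribution this yields \eqref{cmate-C2-1}. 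As in \cite{SW08} and \cite{FLM11}, since $\kappa$ and all constants depend only on $|u|_{C^0}$ and the geometry (not on the gradient bound, once \eqref{cmate-C1-1} is used to control first-order terms directly), $C_2$ is independent of $C_1$.

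The main obstacle is the third-order torsion terms. The bad term $\sum_i S_{\alpha-1;i}(\fg^{i\bi})^2 \cdot \frac{1}{w}\cdot 2\fRe\{\ol{T_{ik}^j}\fg_{i\bj k}\}$ (summed over $k$, coming from $\mathcal{L}(\Delta u)/w$) is only controlled after a delicate splitting: the diagonal part $i = j$ pairs against $\sum_i S_{\alpha-1;i}(\fg^{i\bi})^2|\fg_{i\bi k}|^2$, which partially cancels with the first-order condition $\partial_k W = 0$ (i.e. $\sum_i S_{\alpha-1;i}(\fg^{i\bi})^2\fg_{i\bi k}/w = -\kappa\eta_k + \ldots$), while the off-diagonal part $i \neq j$ is absorbed into the good term $\sum_{i,j} S_{\alpha-1;i}(\fg^{i\bi})^2\fg^{j\bar j}|\fg_{i\bj k}|^2$ from \eqref{cmate-eq149} using $|\ol{T_{ik}^j}\fg_{i\bj k}| \leq \varepsilon \fg^{j\bar j}|\fg_{i\bj k}|^2 + C\varepsilon^{-1}\fg_{j\bar j}$ — but one must check that the resulting error $C\varepsilon^{-1}\sum_{i,j} S_{\alpha-1;i}(\fg^{i\bi})^2\fg_{j\bar j}$ does not exceed $C\sum_i F^{i\bi}\cdot(\max_j \fg_{j\bar j})$, i.e. grows no faster than $w\sum_i F^{i\bi}$, so that after dividing by $w$ it remains $O(\sum_i F^{i\bi})$ and is swallowed by the Lemma~\ref{cmate-2alternative} term. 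This bookkeeping is exactly the point flagged in the introduction as being substantially more intricate for $2 \leq \alpha \leq n-2$ than for the endpoint cases treated in \cite{GL12}, \cite{GL3}, and it is where the bulk of the work lies.
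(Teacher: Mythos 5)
Your proposal takes essentially the same route as the paper: a maximum-principle argument on $w$ multiplied by a barrier built from $u - \ul u$, using the commutator formula \eqref{gblq-R155}, the twice-differentiated equation together with the positivity \eqref{glz}--\eqref{cmate-eq149}, and Lemma~\ref{cmate-2alternative} to close. The ``delicate splitting'' of the torsion terms you flag is handled in the paper by completing the square: inequality \eqref{cmate-C2-E1} produces the good term $\sum_{i,j,l} S_{\alpha-1;i}(\fg^{i\bi})^2\fg^{j\bj}\bigl|\fg_{i\bj l} - T^j_{il}\fg_{j\bj}\bigr|^2$, and \eqref{cmate-C2-2} bounds $|\partial_l w|^2/w$ by precisely its $j=l$ diagonal slice, so the cancellation you expect is literal rather than merely partial.

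One small correction: the paper obtains the independence of $C_2$ from the gradient bound $C_1$ by taking the barrier $\phi = e^{A\eta}$, so that $\phi_{i\bi} = A\phi\eta_{i\bi} + A^2\phi|\eta_i|^2$ supplies a free positive $A^2\phi|\eta_i|^2$ term that absorbs the Schwarz cross-term in \eqref{cmate-C2-3} without any appeal to a gradient estimate. Your parenthetical suggestion to ``use \eqref{cmate-C1-1} to control first-order terms directly'' would make the second-order constant depend on the gradient bound, which contradicts the independence you assert in your closing sentence; with the linear exponent $-\kappa(u-\ul u)$ you lose the quadratic term and hence need either the gradient estimate (sacrificing independence) or the exponential barrier to recover it.
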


\begin{proof}
Let $\phi$ be a function to be determined later and assume that
$w e^{\phi}$  reaches its maximum at some point $p \in M $
where $w = \Delta u + tr\chi$. 
Choose local coordinates around $p$ such that
$g_{i\bar j}(p) = \delta_{ij}$ and $\fg_{ij}$ is diagonal. At $p$ we have
\begin{equation}
\label{cmate-C2-F1}
\frac{\partial_l w}{w} + \partial_l\phi = 0,  \;\;
\frac{\bar\partial_l w}{w} + \bar\partial_l\phi = 0
\end{equation}
and
\begin{equation}
    \label{cmate-C2-F}
\frac{\bar\partial_l\partial_l w}{w} - \frac{\bar\partial_l w\partial_l w}{w^2} 
    + \bar\partial_l\partial_l\phi \leq 0.
\end{equation}

By \eqref{cmate-C2-F1} and Schwarz inequality,
\begin{equation}
    \label{cmate-C2-2}
  \begin{aligned}
    \big|\partial_l w \big|^2
    = \,& \Big|\sum_i \fg_{i\bar i l} \Big|^2
           = \Big|\sum_i (\fg_{l\bar ii} - T^i_{li} \fg_{i\bar i}) + \lambda_l\Big|^2 \\
 \leq \,& w \sum_i \fg^{i\bar i} \big| \fg_{l\bar ii} - T^i_{li} \fg_{i\bar i}\big|^2
          - 2 w \mathfrak{Re} \big\{\phi_l \bar{\lambda_l} \big\} 
          - \big|\lambda_l\big|^2
 \end{aligned}
\end{equation}
where
\begin{equation*}
    \lambda_l = \sum_i \Big(\chi_{i\bar il} - \chi_{l\bar ii} 
                + \sum_j T^j_{li}\chi_{j\bar i}\Big).
\end{equation*}

Next, by \eqref{gblq-R155} and \eqref{cmate-eq149},
\begin{equation}
\label{cmate-C2-E1}
\begin{aligned}
    \sum_l S_{\alpha-1;l} (\fg^{l\bar l})^2 \bar\partial_l\partial_l w
      = \, & \sum_{i,l} S_{\alpha-1;l} (\fg^{l\bar l})^2 \fg_{i\bar i l\bar l} \\
    \geq \,& \sum_{i,l} S_{\alpha-1 ;l} (\fg^{l\bar l})^2 \fg_{l\bar li\bar i}
              - 2 \sum_{i,j,l} S_{\alpha-1 ;l} (\fg^{l\bar l})^2
               \mathfrak{Re} \big\{\overline{T^j_{li}}\fg_{l\bar ji}\big\} \\
           & + \sum_{i,j,l} S_{\alpha-1 ;l} (\fg^{l\bar l})^2
               T^j_{li} \overline{T^j_{li}} \fg_{j\bar j}
               - C w \sum_l S_{\alpha-1;l} (\fg^{l\bar l})^2 \\
    \geq \,&  \sum_{i,j,l} S_{\a -1;i}(\fg^{i\bi})^2\fg^{j\bj}
                  \big|\fg_{i\bj l} - T^j_{il}\fg_{j\bj}\big|^2 \\
               &  - C w \sum_l S_{\alpha-1;l} (\fg^{l\bar l})^2 - C.
\end{aligned}
\end{equation}
It follows from (\ref{cmate-C2-F}),  (\ref{cmate-C2-2}) and (\ref{cmate-C2-E1}) that
\begin{equation}
    \label{cmate-C2-F2}
    \begin{aligned}
    0 \geq \,& w \sum_i S_{\alpha -1;i}(\fg^{i\bar i})^2 \phi_{i\bar i}
               + 2 \sum_i S_{\alpha -1;i} (\fg^{i\bar i})^2 \mathfrak{Re}
                 \big\{\phi_i \bar {\lambda_i}\big\} \\
             & - C  w \sum_i S_{\alpha -1;i}(\fg^{i\bar i})^2 - C.
       \end{aligned}
\end{equation}

Let $\phi = e^{A\eta}$ with $\eta = \ul u - u + \sup_M (u - \ul u)$, where
$\ul u \in C^2 (\bM)$ satisfies $\chi_{\ul u}> 0$ and \eqref{cmate-eq14},
 and $A$ is a positive constant to be determined.
So
\[      \phi_i = A \phi \eta_i, \;\;
   \phi_{i\bi} = A \phi \eta_{i\bi} + A^2 \phi |\eta_i|^2. \]
Applying Schwarz inequality again,
\begin{equation}
\label{cmate-C2-3}
\begin{aligned}
	2 \sum_i S_{\alpha -1;i} (\fg^{i\bar i})^2 \fRe \big\{\phi_i \bar{\lambda_ i}\big\}
	 = \,& 2 A \phi \sum_i S_{\alpha -1;i} (\fg^{i \bar i})^2
            \mathfrak{Re} \big\{\eta_i \bar{\lambda_ i}\big\}  \\
  \geq - w A^2 \phi \sum_i \,& S_{\alpha -1;i}  (\fg^{i\bar i})^2 |\eta_i|^2
           - \frac{C \phi}{w}  \sum_i S_{\alpha -1;i} (\fg^{i\bar i})^2.
\end{aligned}
\end{equation}
Finally, by (\ref{cmate-C2-F2}) and (\ref{cmate-C2-3}),
\begin{equation}
\label{cmate-C2-F3}
\begin{aligned}
	w A \sum_i S_{\alpha -1;i}(\fg^{i\bar i})^2 \eta_{i\bar i}
   \leq \,& \frac{C}{\phi} + C \Big(\frac{1}{w} + \frac{w}{\phi}\Big)
       \sum_i S_{\alpha -1;i}(\fg^{i\bar i})^2.
\end{aligned}
\end{equation}
From Lemma~\ref{cmate-2alternative}, this gives a bound $w \leq C$ at $p$ for $A$
sufficiently large.
\end{proof}

\bigskip

\end{document}